\tikzset{join/.code=\tikzset{after node path={%
\ifx\tikzchainprevious\pgfutil@empty\else(\tikzchainprevious)%
edge[every join]#1(\tikzchaincurrent)\fi}}}
\tikzset{>=stealth',every on chain/.append style={join},
         every join/.style={->}}
\tikzstyle{labeled}=[execute at begin node=$\scriptstyle,
\DeclareFontFamily{U}{wncy}{}
\DeclareFontShape{U}{wncy}{m}{n}{<->wncyr10}{}
\DeclareSymbolFont{mcy}{U}{wncy}{m}{n}
\DeclareMathSymbol{\Sh}{\mathord}{mcy}{"58}
\newtheoremstyle{named}{}{}{\itshape}{}{\bfseries}{.}{.5em}{\thmnote{#3's }#1}
\theoremstyle{named}
\theoremstyle{plain}
\newtheorem{theorem}{Theorem}
\newtheorem{corollary}{Corollary}
\newtheorem{lemma}{Lemma}
\newtheorem{proposition}{Proposition}
\theoremstyle{remark}
\newtheorem{remark}{Remark}
\theoremstyle{definition}
\newtheorem{definition}{Definition}
\newcommand{\Q}{\mathbf{Q}}
\newcommand{\F}{\mathbf{F}}
\newcommand{\Z}{\mathbf{Z}}
\newcommand{\Zp}{\Z_p}
\newcommand{\Qp}{\mathbf{Q}_p}
\newcommand{\Selp}{\mathrm{Sel}_p}
\newcommand{\Tr}{\mathrm{Tr}}
\title{rank parity for congruent supersingular elliptic curves}
\date{\today}
\begin{document}

\begin{abstract}
A recent paper of Shekhar compares the ranks of elliptic curves $E_1$ and $E_2$ for which there is an isomorphism $E_1[p] \simeq E_2[p]$ as $\mathrm{Gal}(\bar{\Q}/\Q)$-modules, where $p$ is a prime of good ordinary reduction for both curves. In this paper we prove an analogous result in the case where $p$ is a prime of good supersingular reduction.
\end{abstract}

\author{Jeffrey Hatley}
\address[Jeffrey Hatley]{Department of Mathematics
Bailey Hall 202
Union College
Schenectady, NY 12308
Phone: (609) 238-5945
Fax: (518) 388-6005}
\email{hatleyj@union.edu}

\subjclass{Primary 14H52; Secondary 11F80, 11R23}
\keywords{Elliptic Curves, Galois Representations, Iwasawa Theory}

\maketitle

\section{Introduction}

Let $p$ be a prime, write $G_\Q = \mathrm{Gal}(\bar{\Q}/\Q)$, and let $E_1$ and $E_2$ be two elliptic curves defined over $\Q$. We say that $E_1$ and $E_2$ are {\em congruent mod $p$} if there is an isomorphism of $G_\Q$-modules
\[
E_1[p] \simeq E_2[p].
\]
In this case, it is interesting to study what other properties of $E_1$ and $E_2$ are entangled. For instance, Greenberg and Vatsal \cite{GV} showed that if $p$ is a prime of good ordinary reduction for both curves, then information about the $p$-primary Selmer group (over the cyclotomic $\Z_p$-extension of $\Q$) of one curve often gives information about the $p$-primary Selmer group of the other. 

Let $N_i$ denote the conductor of $E_i$, let $\bar{N}_i$ denote the prime-to-$p$ Artin conductor of the Galois module $E_i[p]$ over $\Q$, and let $\Sigma$ be any finite set of primes containing $p$, $\infty$, and any prime at which $E_1$ or $E_2$ has bad reduction. Inside of $\Sigma$ we find a subset 
\begin{equation}\label{sigma0}
\Sigma_0 = \{ v \in \Sigma : v \mid N_1 / \bar{N}_1 \ \mathrm{or} \ v \mid N_2 / \bar{N}_2\};
\end{equation}
%Note that $p \notin \Sigma_0$. 
 The strategy of \cite{GV} was used in a recent paper of Shekhar \cite{Shek} to prove that if $E_1[p] \simeq E_2[p]$ for $p$ a prime of good ordinary reduction, then the analytic rank of $E_1$ over $\Q$ is related to the analytic rank of $E_2$ over $\Q$. To be more precise, write $r^{an}(E_i / \Q)$ for the analytic rank of $E_i$ over $\Q$, and let $S_i$ be the set of primes in $\Sigma_0$ for which $E_i$ has split multiplicative reduction. Let $|S_i|$ denote the cardinality of $S_i$. Then under some additional technical hypotheses, Shekhar proved that 
\[
E_1[p] \simeq E_2[p] \Longrightarrow r^{an}(E_1 / \Q) + |S_1| \equiv r^{an}(E_2 / \Q) + |S_2| \mod 2.
\]

The proof of this result relies on Iwasawa-theoretic techniques. It has long been known that Iwasawa theory for elliptic curves has fewer complications when working with primes of good {\em ordinary} reduction; however, every elliptic curve over $\Q$ also has infinitely many good supersingular primes \cite{Elk}, so these should not be neglected. In this paper, we prove an analogue of Shekhar's result in the supersingular setting. 

\begin{theorem}
Let $E_1$ and $E_2$ be elliptic curves over $\Q$, and suppose $p \geq 3$ is a prime such that
\begin{itemize}
\item $E_1[p]$ and $E_2[p]$ are isomorphic as $G_\Q$-modules, and
\item $E_1$ (and hence also $E_2$) is supersingular at $p$.
\end{itemize}
Then there exist two explicit and computable finite sets of primes $S_1$ and $S_2$ such that 
\[
r^{an}(E_1 / \Q) + |S_1| \equiv r^{an}(E_2 / \Q) + |S_2| \mod 2.
\]

\end{theorem}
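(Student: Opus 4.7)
The plan is to imitate Shekhar's strategy, replacing the classical $p$-primary Selmer group (which is not $\Lambda$-cotorsion in the supersingular case) by Kobayashi's signed plus/minus Selmer groups $\mathrm{Sel}^\pm_p(E_i/\Q_\infty)$ over the cyclotomic $\Z_p$-extension $\Q_\infty/\Q$. These are known to be cotorsion $\Lambda = \Z_p[[T]]$-modules, so they admit well-defined Iwasawa invariants $\lambda^\pm(E_i)$ and $\mu^\pm(E_i)$. The argument then splits into an analytic-to-algebraic step and an algebraic comparison step.

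For the analytic-to-algebraic step, I would invoke the $p$-parity conjecture in the supersingular setting (known through work of B.D.~Kim and Nekov\'a\v{r}): the analytic rank $r^{an}(E_i/\Q)$ is congruent mod $2$ to the $\Z_p$-corank of $\mathrm{Sel}^\pm_p(E_i/\Q)$. A control theorem of Kobayashi then relates this corank at level zero to the $\Lambda$-invariants $\lambda^\pm(E_i)$, modulo easily understood local Euler characteristic contributions. The upshot is that the parity of $r^{an}(E_i/\Q)$ is controlled by $\lambda^\pm(E_i)$ up to a sum of local terms.

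For the algebraic comparison step, I would adapt the Greenberg--Vatsal framework to the signed setting, as in Kim's work. Namely, one introduces the $\Sigma$-imprimitive signed Selmer groups $\mathrm{Sel}^{\pm,\Sigma}_p(E_i/\Q_\infty)$, which differ from the primitive versions by explicit local factors indexed by $v \in \Sigma_0$. The isomorphism $E_1[p] \simeq E_2[p]$ as $G_\Q$-modules forces the residual $\Sigma$-imprimitive Selmer groups of $E_1$ and $E_2$ to agree, so the $\mu^\pm$ invariants vanish simultaneously and $\lambda^{\pm,\Sigma}(E_1) = \lambda^{\pm,\Sigma}(E_2)$. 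Passing back from the imprimitive to the primitive invariants then produces a difference
\[
\lambda^\pm(E_1) - \lambda^\pm(E_2) \;=\; \sum_{v \in \Sigma_0}\bigl(\delta_v(E_1) - \delta_v(E_2)\bigr),
\]
and a case analysis (trivial reduction, non-split multiplicative, split multiplicative, additive of tame type) shows that the parity contribution of $\delta_v(E_i)$ is exactly $1$ when $v \in S_i$ and $0$ otherwise; this is where the sets $S_1, S_2$ of split multiplicative primes enter. Combining with the parity translation from the previous step yields the theorem.

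The main obstacle I expect is the behaviour at the supersingular prime $p$ itself. The signed local condition is defined intrinsically to each curve using the $\pm$-subspaces inside the formal group cohomology, and it is not immediately obvious that the mod $p$ reductions of $\mathrm{Sel}^+_p(E_1/\Q_\infty)$ and $\mathrm{Sel}^+_p(E_2/\Q_\infty)$ are determined purely by the $G_\Q$-module $E_i[p]$ and the local conditions away from $p$. Verifying compatibility of the $\pm$ local conditions under the residual isomorphism --- effectively a comparison of Kobayashi's local subgroups at the level of $p$-torsion --- is the key technical input, and the rest of the argument is then a careful bookkeeping of local invariants along the lines of \cite{GV} and \cite{Shek}.
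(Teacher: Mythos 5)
Your outline follows the paper's route (signed Selmer groups, Kim's comparison of imprimitive $\lambda^{\pm}$-invariants, Nekov\'a\v{r}'s parity theorem, and a local computation of the $\delta$-terms), but two steps are wrong or missing as written. The more serious one is the bridge from $\mathrm{corank}_{\Zp}\,\Selp(E/\Q)$ to $\lambda_E^{\pm} \bmod 2$. This is \emph{not} a matter of ``easily understood local Euler characteristic contributions'': Kobayashi's control theorem only says that $\mathrm{corank}_{\Zp}\,\Selp^{\pm}(E/\Q_n)$ stabilizes at some $\bar{\lambda}^{\pm} \leq \lambda_E^{\pm}$, and one must separately show that $\lambda_E^{\pm} - \bar{\lambda}^{\pm}$ is even. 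Following Greenberg, this is done by proving that the non-divisible quotients $U_n^{\pm} = S_n^{\pm}/(S_n^{\pm})_{\mathrm{div}}$ have square order via Flach's generalized Cassels--Tate pairing, and that pairing is only available because Kobayashi's local condition $\mathbf{H}_{n,p}^{\pm}$ is its own annihilator under local Tate duality --- a theorem of Kim that is the genuinely new input needed in the supersingular case. Your proposal silently assumes this parity descent. (Conversely, the issue you single out as the ``main obstacle,'' compatibility of the $\pm$ local conditions at $p$ under $E_1[p] \simeq E_2[p]$, is already settled in Kim's published work on imprimitive signed Selmer groups and only needs to be cited.)

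The second problem is your identification of $S_i$ with the set of split multiplicative primes, which is imported from the ordinary-case statement and is false here as a description of where $\delta_v(E_i)$ is odd. The Greenberg--Vatsal computation (multiplicity of $\tilde{\ell}^{-1}$ as a root of the reduced local Euler factor) gives an odd contribution exactly when: $\ell$ is a prime of good reduction in $\Sigma_0$ with $\ell + 1 \equiv a_\ell(E) \bmod p$ and $\ell \not\equiv 1 \bmod p$; or $\ell$ is split multiplicative and $\ell \equiv 1 \bmod p$; or $\ell$ is nonsplit multiplicative and $\ell \equiv -1 \bmod p$ (additive primes contribute $0$). In particular, good-reduction primes of $E_i$ lying in $\Sigma_0$ (because they divide $N_j/\bar{N}_j$ for the \emph{other} curve) can contribute, and split multiplicative primes need not: in the paper's $p=5$ example one has $\Sigma_0 = \{13\}$, where $\delta(E_1,13)$ is odd even though $E_1$ has good reduction at $13$, while $\delta(E_2,13)$ is even even though $E_2$ is split multiplicative there, since $13 \not\equiv 1 \bmod 5$. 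Your case analysis, carried out correctly, produces these congruence conditions rather than the split-multiplicative dichotomy, and the sets $S_1, S_2$ in the theorem must be defined accordingly.
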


See Theorem \ref{main-theorem} for a more precise statement. 

\subsection{Outline} We follow the strategy of \cite{Shek} closely, but we must overcome some complications that arise from working with supersingular primes. We begin by reviewing the theory of $\pm$-Selmer groups, recalling some important results on algebraic Iwasawa invariants in the supersingular setting. After translating many results into the setting of supersingular elliptic curves, we obtain our main theorem as a consequence of the parity conjecture. We conclude with some explicit examples.

\subsection{Notation}

Throughout this paper we fix an odd prime $p \geq 3$. Write $\Q_\infty$ for the cyclotomic $\Z_p$-extension of $\Q$, and write $\Q_n$ for the $n$th layer of this extension, so the Galois group $G_n := \mathrm{Gal}(\Q_n / \Q) \simeq \Z / p^n \Z$ and $G_\infty:= \mathrm{Gal}(\Q_\infty / \Q) \simeq \Z_p$. We denote the Iwasawa algebra by $\Lambda = \Zp[[G_\infty]]$. 
%We retain the notation of $\Sigma$ and $\Sigma_0$ from the introduction, and we write $\Q_\Sigma$ for the maximal extension of $\Q$ unramified outside $\Sigma$.

\subsection{Assumptions}

Throughout the paper, $E$ will refer to an elliptic curve such that 
\begin{itemize}
\item $E$ is defined over $\Q$, and
\item $E$ is supersingular at $p$.
\end{itemize} 
$E_1$ and $E_2$ will refer to a pair of such curves which are also congruent mod $p$. The assumption that $a_p(E)=0$ is necessary to apply some crucial results of Kim \cite{Kim09}. While many of the preliminary results can be proven in slightly more generality, we will need the full strength of Kim's results to prove our main theorem, so we set this assumption at the beginning. It is possible that this hypothesis can be removed by using ideas of Sprung \cite{Spr}, but the statement of the theorem would likely be more complicated, and since $a_p(E)=0$ is automatic whenever $p \geq 5$, we work under this slightly more restrictive setting.

\section{Algebraic Iwasawa Invariants}

When studying the Iwasawa theory of an elliptic curve at a prime $p$, a careful distinction must be made depending on whether $p$ is {\em ordinary} or {\em supersingular}. Recall that $E$ is supersingular at $p$ if $p \mid a_p(E)$; by the Riemann hypothesis for elliptic curves over finite fields, if $p \geq 5$ then this is equivalent to $a_p(E) = 0$. We begin with an overview of the ordinary case before initiating our study of the supersingular setting.
 
\subsection{The ordinary case}\label{ordcase}

We quickly recall some definitions and facts about Selmer groups of elliptic curves in the ordinary case; since this is all standard, we omit references and direct the interested reader to \cite{Gr} or \cite{Gr2} for more details. Let $E$ be an elliptic curve defined over $\Q$ with good ordinary reduction at $p$, and let $\Sigma$ be any finite set of primes containing $p$, $\infty$, and all primes of bad reduction for $E$. For every $n \geq 0$, the $p$-Selmer group of $E$ over $\Q_n$, denoted Sel$_p(E/\Q_n)$, is defined as
\begin{align*}
\mathrm{Sel}_p(E / \Q_n) &= \ker \left( H^1(\Q_n, E[p^\infty]) \longrightarrow \displaystyle\prod_{\ell} \mathcal{H}_\ell(\Q_n) \right) \\ 
&= \ker \left( H^1(\Q_\Sigma / \Q_n, E[p^\infty]) \longrightarrow \displaystyle\prod_{\ell \in \Sigma} \mathcal{H}_\ell(\Q_n) \right)
\end{align*}
where 
\begin{equation}\label{hldefn}
\mathcal{H}_\ell (\Q_n):= 
\displaystyle\prod_{\eta \mid \ell} \frac{H^1(\Q_{n,\eta},E[p^\infty])}{E(\Q_{n,\eta}) \otimes \Q_p / \Z_p}.
\end{equation}
Here $\Q_{n,\ell}$ is the completion of $\Q_n$ at $\ell$; note that $E(\Q_{n,\ell}) \otimes \Q_p / \Z_p = 0$ for $\ell \neq p$ \cite[Proposition 2.1]{Gr}. Similarly we define
\begin{align*}
\Selp(E/\Q_\infty) &= \varinjlim \Selp(E / \Q_n) \\ 
&= \ker \left( H^1(\Q_\Sigma / \Q_\infty, E[p^\infty]) \longrightarrow \displaystyle\prod_{\ell \in \Sigma} \mathcal{H}_\ell(\Q_\infty) \right),
\end{align*}
where
\[
\mathcal{H}_\ell(\Q_\infty) := \displaystyle\prod_{\eta \mid \ell} \frac{H^1(\Q_{\infty,\eta},E[p^\infty])}{E(\Q_{\infty,\eta}) \otimes \Q_p / \Z_p}.
\]

\begin{remark}\label{divisible} All finite primes are finitely decomposed in $\Q_\infty$, so this is a finite product. Since $E[p^\infty]$ is a divisible group and $G_{\Q_{\infty,\eta}}$ has $p$-cohomological dimension $1$, it follows from \cite[Lemma 4.5]{Gr} that $\mathcal{H}_\ell(\Q_\infty)$ is a divisible group.
\end{remark}

The Pontryagin dual of $\Selp(E/\Q_\infty)$, denoted $\mathcal{X}(E / \Q_\infty$), is a finitely-generated $\Lambda$-module. 

Suppose for this subsection that $p$ is a prime of good ordinary reduction for $E$. In this case, Kato proved that $\mathcal{X}(E / \Q_\infty)$ is actually a {\em torsion} $\Lambda$-module (and we say that $\Selp(E/\Q_\infty)$ is $\Lambda$-cotorsion). Recall that $\Lambda$ can be identified with a formal power series ring $\Zp[[T]]$ under the map $\gamma \mapsto (T+1)$ for $\gamma$ a topological generator of $G_\infty$. The structure theorem for $\Lambda$-modules then implies that there is a map
\[
\mathcal{X}(E/\Q_\infty) \rightarrow \left( \bigoplus_{i=1}^n \Lambda / (f_i(T))^{a_i} \right) \oplus \left( \bigoplus_{j=1}^m \Lambda / (p^{\mu_j}) \right)
\]
with finite kernel and cokernel. (Such a map is called a pseudo-isomorphism; if $X$ is pseudo-isomorphic to $Y$, then we write $X \sim Y$.) The $a_i$ are positive integers are the $f_i$ are irreducible monic polynomials

 The {\em algebraic Iwasawa invariants} of $E$ are defined as
\begin{equation}\label{invars}
\lambda_E = \sum_{i=1}^n a_i \cdot \mathrm{deg}(f_i(T)) \quad \mathrm{and} \quad \mu_E = \sum_{j=0}^m \mu_j.
\end{equation}
The {\em characteristic polynomial} for $\mathcal{X}(E/\Q_\infty)$ is then
\[
f_E(T) = p^{\mu_E} \cdot \displaystyle\prod_{i=1}^{n} f_i(T)^{a_i}.
\]

It is important to note that the $p$-Selmer group of an elliptic curve is determined by $E[p^\infty]$ but not by $E[p]$. One of the key observations of \cite{GV} is that it is useful to define a ``non-primitive'' Selmer group
\[
\Selp^{\Sigma_0} (E / \Q_\infty) = \ker \left( H^1(\Q_\Sigma / \Q_\infty, E[p^\infty]) \longrightarrow \displaystyle\prod_{\ell \in \Sigma \setminus \Sigma_0} \mathcal{H}_\ell(\Q_\infty) \right),
\]
where $\Sigma_0$ is a subset of $\Sigma$ not containing $p$ or $\infty$.

Greenberg conjectures that $\mu_E=0$ whenever $p$ is ordinary and $E[p]$ is absolutely irreducible. If $\mu_E$ vanishes, there are several important consequences. The first is that both $\Selp(E / \Q_\infty)[p]$ and $\Selp^{\Sigma_0}(E / \Q_\infty)[p]$ are finite. One can then show that both 
$\Selp(E / \Q_\infty)$ and $\Selp^{\Sigma_0}(E / \Q_\infty)$ are divisible groups, with  
$\Selp(E /\Q_\infty) \simeq (\Q_p / \Z_p)^{\lambda_E}$. Finally, the $\F_p$-dimension of $\Selp(E / \Q_\infty)[p]$ is exactly $\lambda_E$, and we denote the $\F_p$-dimension of $\Selp^{\Sigma_0}(E / \Q_\infty)[p]$ by $\lambda_{E}^{\Sigma_0}$. 

Furthermore, one has a split exact sequence
\begin{equation*}\label{primitiveexact}
0 \rightarrow \Selp(E / \Q_\infty) \rightarrow \Selp^{\Sigma_0}(E / \Q_\infty) \rightarrow \displaystyle\prod_{\ell \in \Sigma_0} \mathcal{H}_\ell(\Q_\infty, E[p^\infty]) \rightarrow 0,
\end{equation*}
so that 
\begin{equation}\label{seldifford}
\Selp^{\Sigma_0}(E/ \Q_\infty) / \Selp(E/ \Q_\infty) \simeq \displaystyle\prod_{\ell \in \Sigma_0} \mathcal{H}_\ell(\Q_\infty) \simeq (\Q_p / \Z_p)^{\delta(E,\Sigma_0)}
\end{equation}
where $\delta(E,\Sigma_0)$ is a non-negative integer. Thus 
\[
\lambda_{E}^{\Sigma_0} = \lambda_E + \delta(E,\Sigma_0).
\] 
One of the main results of \cite{GV} is that if $E_1[p] \simeq E_2[p]$ and $\Sigma_0$ contains all primes of bad reduction, then
\[
\Selp^{\Sigma_0} (E / \Q_\infty) \simeq \Selp^{\Sigma_0} (E_2 / \Q_\infty),
\]
and so
\begin{equation}\label{lambdasord}
\lambda_{E_1} + \delta(E_1, \Sigma_0) = \lambda_{E_2} + \delta(E_2, \Sigma_0).
\end{equation}

Thus, knowledge of the Iwasawa invariants for one elliptic curve reveals information about the Iwasawa invariants of congruent elliptic curves. 

A result of Guo (see \cite[Proposition 3.10]{Gr}) shows that 
\begin{equation}\label{corankparity}
\mathrm{corank}_{\Z_p} \ \Selp(E/ \Q) \equiv \lambda_E \mod 2.
\end{equation}
The main idea in \cite{Shek} is to use (\ref{lambdasord}) and (\ref{corankparity}) in conjunction with the parity conjecture to obtain a result about the variation of rank parity among congruent elliptic curves.

\subsection{The supersingular case}\label{sscase} For the rest of the paper, we now suppose that $p$ is a prime of good {\em supersingular} reduction for an elliptic curve $E$ defined over $\Q$. 

In the supersingular setting, we can and do make all of the same definitions as in Section \ref{ordcase}, but it turns out that $\Selp(E / \Q_\infty)$ is {\em never} $\Lambda$-cotorsion. To obtain results analogous to those in the ordinary setting, we must work with {\em plus/minus Selmer groups}. These refined Selmer groups were first studied by Kurihara \cite{Kur} and Kobayashi \cite{Kob}.
%, but we follow Kim \cite{Kim09}, since we will be using many results from that paper.

 Retain the definition of $\Sigma$ from Section \ref{ordcase}. For any $n \geq 0$, we write $\Q_{n,p}$ for the completion of $\Q_n$ at the unique prime of $\Q_\infty$ above $p$. For notational convenience, also let $\Q_{-1,p} = \Q_{0,p} = \Q_p$. For $n \geq m$, let $\Tr_{n,m}$ denote the trace map from $E(\Q_{n,p})$ to $E(\Q_{m,p})$. Define the finite-level {\em plus/minus norm groups} as follows:
\begin{align*}
E^+(\Q_{n,p}) &= \{ x \in E(\Q_{n,p}) \mid \Tr_{n,m+1} (x) \in E(\Q_{m,p}) \textrm{ for every } 0 \leq m \leq n, \ m \ \mathrm{even} \}, \\
E^-(\Q_{n,p}) &= \{ x \in E(\Q_{n,p}) \mid \Tr_{n,m+1} (x) \in E(\Q_{m,p}) \textrm{ for every } -1 \leq m \leq n, \ m \ \mathrm{odd} \}.
\end{align*}
Furthermore, we define the infinite-level plus/minus norm groups:
\[
E^\pm(\Q_{\infty,p}) = \displaystyle\cup_{n \geq 0} E^\pm(\Q_{n,p}).
\]
For every integer $n \geq 0$, set
\[
\mathbf{H}_{n,p}^\pm = (E^\pm(\Q_{\infty,p})\otimes \Q_p / \Z_p)^{\mathrm{Gal}(\Q_\infty / \Q_n)} \subset H^1(\Q_{n,p}, E[p^\infty]).
\]
Define the plus/minus cohomology subgroups $\mathcal{H}^\pm_\ell$ for each prime $\ell$ by
\[
\mathcal{H}_\ell^{\pm}(\Q_n) = 
\begin{cases} 
       \mathcal{H}_\ell(\Q_n) & \ell \neq p \\
       & \\ 
      \displaystyle\frac{H^1(\Q_{n,p},E[p^\infty])}{\mathbf{H}_{n,p}^\pm} & \ell = p \\
\end{cases}
\]
Then we define the plus/minus Selmer groups in a manner completely analogous to the ordinary case.

\begin{definition} For $n \geq 0$ or $n=\infty$, we define
\[
\Selp^\pm (E / \Q_n) = \ker \left( H^1(\Q_\Sigma / \Q_n, E[p^\infty]) \to \displaystyle\prod_{\ell \in \Sigma} \mathcal{H}^\pm_\ell(\Q_n)  \right).
\]
\end{definition}

\begin{remark}\label{selfdualannihilator} Recall that if $T$ is the $p$-adic Tate module of $E$, and if we set and $V = T \otimes_{\Zp} \Qp$ and $A = V / T$, then $A \simeq E[p^\infty]$. By a result of Kim \cite[Proposition 3.4]{Kim13}, the local conditions $\mathbf{H}_{n,p}^\pm$ are their own annihilators under the autoduality of $E$ and the Tate pairing
\[
H^1(\Q_{n,p},A) \times H^1(\Q_{n,p},T(1)) \rightarrow \Q_p / \Z_p.
\]
This fact plays a crucial role in the proof of Proposition 6, since it allows us to invoke a theorem of Flach which guarantees the existence of a non-degenerate, skew-symmetric pairing on our Selmer groups.
\end{remark}

\begin{remark}
The Selmer groups we have defined agree with Kobayashi's at infinite level, but at finite levels they may differ. A discussion of this can be found in \cite[Remark A.1]{PolWes}. In this paper, we only use a result of Kobayashi at infinite level, and all finite-level results rely on the work of Kim, whose conventions we have adopted.
\end{remark}

\begin{remark}\label{Sel0}
Note that for $n=0$ we have
\[
E^-(\Q_{n,p}) = E(\Q_p),
\]
hence $\Selp^\pm(E / \Q) = \Selp(E / \Q)$.
\end{remark}

These refined Selmer groups $\Selp^\pm(E / \Q_n)$ have many of the desirable properties that $\Selp(E / \Q_n)$ has in the ordinary setting. In particular, Kobayashi proved

\begin{theorem}[\cite{Kob}, Theorem 2.2]
The Pontryagin dual $\mathcal{X}^\pm(E / \Q_\infty)$ of $\Selp^\pm(E / \Q_\infty)$ is a finitely-generated torsion $\Lambda$-module.
\end{theorem}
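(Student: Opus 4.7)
The plan is to follow Kobayashi's original strategy: reduce the statement to a nonvanishing assertion about Kato's Euler system via Poitou--Tate duality, and then establish the nonvanishing using an explicit reciprocity law at $p$.

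First, I would recast the problem in terms of the $p$-adic Tate module $T$ of $E$. Let $H^1_{\mathrm{Iw}}(\Q_\Sigma/\Q, T) = \varprojlim_n H^1(\Q_\Sigma/\Q_n, T)$. By global and local duality, $\mathcal{X}^\pm(E/\Q_\infty)$ fits into an exact sequence whose middle term is $H^1_{\mathrm{Iw}}(\Q_\Sigma/\Q, T)$ and whose local term at $p$ is the quotient $H^1_{\mathrm{Iw}}(\Q_p,T) / \mathbf{H}^{\pm,\perp}_{\infty,p}$, where $\mathbf{H}^{\pm,\perp}$ denotes the annihilator of $\mathbf{H}^\pm$ under the Tate pairing. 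By Remark~\ref{selfdualannihilator}, this annihilator is precisely the plus/minus norm subgroup on the $T$-side. Kato's theorem then guarantees that $H^1_{\mathrm{Iw}}(\Q_\Sigma/\Q, T)$ is a torsion-free $\Lambda$-module of rank one, and supplies a distinguished zeta element $\mathbf{z}_\infty$ whose image in the above quotient controls whether $\mathcal{X}^\pm$ is $\Lambda$-torsion.

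Second, I would construct Kobayashi's plus/minus Coleman maps $\mathrm{Col}^\pm : H^1_{\mathrm{Iw}}(\Q_p, T) \to \Lambda$ whose kernels are exactly the local conditions appearing in the sequence above. The construction exploits the assumption $a_p(E) = 0$: the characteristic polynomial of Frobenius on the Dieudonn\'e module factors as $X^2 + p$, which allows one to split $H^1_{\mathrm{Iw}}(\Q_p, T) \otimes \Q_p$ into two pieces on which the successive trace maps behave in a controllable way, producing two honest $\Lambda$-valued homomorphisms. In the ordinary setting no such decomposition exists, which is consistent with the fact that $\Selp(E/\Q_\infty)$ itself is never $\Lambda$-cotorsion in the supersingular case.

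Third, I would invoke Kobayashi's explicit reciprocity law, which identifies $\mathrm{Col}^\pm(\mathbf{z}_\infty)$ with Pollack's $p$-adic $L$-functions $L_p^\pm(E) \in \Lambda$. Since these are nonzero, $\mathrm{Col}^\pm(\mathbf{z}_\infty) \neq 0$, and the sequence of Step~1 then forces $\mathcal{X}^\pm(E/\Q_\infty)$ to have $\Lambda$-rank zero; finite generation is automatic because $H^1_{\mathrm{Iw}}(\Q_\Sigma/\Q, T)$ is already a finitely generated $\Lambda$-module. The main obstacle is step two: rigorously defining $\mathrm{Col}^\pm$ and verifying that its kernel matches the local conditions cutting out $\Selp^\pm$ requires careful $p$-adic analysis of the formal group law of $E$ together with a delicate choice of basis in the Dieudonn\'e module, and this is the technical heart of the supersingular theory with no direct counterpart in the ordinary case.
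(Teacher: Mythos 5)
The paper offers no proof of this statement: it is quoted directly from Kobayashi \cite{Kob}, so there is no internal argument to compare yours against. That said, your outline is an essentially faithful reconstruction of Kobayashi's original proof: the Poitou--Tate sequence reduces cotorsionness to the nonvanishing of the image of Kato's zeta element in $H^1_{\mathrm{Iw}}(\Q_p,T)$ modulo the plus/minus local condition; the Coleman maps $\mathrm{Col}^\pm$, built from the factorization $X^2+p$ of the Frobenius polynomial on the Dieudonn\'e module (this is where $a_p(E)=0$ enters), convert that image into Pollack's $p$-adic $L$-functions $L_p^\pm$; and Kato's theorem that $H^1_{\mathrm{Iw}}(\Q_\Sigma/\Q,T)$ is torsion-free of rank one closes the argument. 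Two points need shoring up. First, the assertion ``since these are nonzero'' is the real arithmetic input and is not automatic: one needs Rohrlich's nonvanishing theorem for $L(E,\chi,1)$ as $\chi$ ranges over characters of $p$-power conductor, combined with the interpolation property of $L_p^\pm$ at such characters; without it your argument only bounds the $\Lambda$-rank of $\mathcal{X}^\pm(E/\Q_\infty)$ by one rather than forcing it to be zero. Second, you should record the hypothesis under which Kato's theorem applies in the form you use it (irreducibility of $E[p]$ as a $G_\Q$-module suffices, and holds here because supersingular reduction at $p\geq 3$ makes $E[p]$ irreducible already on the decomposition group at $p$). With those two items supplied, the sketch is the standard and correct route to the theorem.
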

We therefore have pseudo-isomorphisms
\[
\mathcal{X}^+(E/\Q_\infty) \sim \left( \bigoplus \Lambda / (f_i(T))^{a_i} \right) \oplus \left( \bigoplus \Lambda / (p^{\mu_j}) \right)
\]
and
\[
\mathcal{X}^-(E/\Q_\infty) \sim \left( \bigoplus \Lambda / (g_k(T))^{b_k} \right) \oplus \left( \bigoplus \Lambda / (p^{\mu_l}) \right).
\]
In analogy with (\ref{invars}), we can define pairs of Iwasawa invariants.

\begin{definition}
\begin{align*}
\lambda_E^{+} &= \sum a_i \cdot \mathrm{deg}(f_i(T)), &\mu_E^{+} &= \sum \mu_j,\\
\lambda_E^{-} &= \sum b_k \cdot \mathrm{deg}(g_k(T)), &\mu_E^{-} &= \sum \mu_l.
\end{align*}
\end{definition}

\noindent \textbf{Assumption:} We now assume for the rest of the paper that $\mu_E^{\pm} = 0$.

\begin{remark} The $\mu$-invariants are always expected to vanish \cite[Conjecture 7.1]{P-R}, so this assumption is very mild.
\end{remark}

In \cite{Kim09}, Kim shows that many of the results from the ordinary case carry over to the supersingular setting provided one uses the plus/minus Selmer groups. If $\mu_E^{\pm}=0$, then just as in the ordinary case we have
\[
\lambda_E^{\pm} = \mathrm{dim}_{\F_p} \ \Selp^\pm(E / \Q_\infty)[p] = \mathrm{corank}_{\Z_p} \ \Selp^\pm(E / \Q_\infty).
\]

We can also define non-primitive plus/minus Selmer groups. As in Section \ref{ordcase}, let $\Sigma_0$ be a subset of $\Sigma$ which does not contain $p$ or $\infty$.

\begin{definition} For $n \geq 0$ or $n = \infty$, we define
\[
\Selp^{\Sigma_0,\pm} (E / \Q_n) = \ker \left( H^1(\Q_\Sigma / \Q_n, E[p^\infty]) \to \displaystyle\prod_{\ell \in \Sigma \setminus \Sigma_0} \mathcal{H}^\pm_\ell(\Q_n)  \right).
\]
\end{definition}
\noindent In analogy with (\ref{seldifford}), we have
\begin{proposition}[\cite{Kim09}, Corollary 2.5]
\begin{equation}\label{seldiffss}
\Selp^{\pm,\Sigma_0}(E/ \Q_\infty) / \Selp^\pm(E/ \Q_\infty) \simeq \displaystyle\prod_{\ell \in \Sigma_0} \mathcal{H}^\pm_\ell(\Q_\infty). 
%\simeq (\Q_p / \Z_p)^{\delta^\pm(E,\Sigma_0)}
\end{equation}
\end{proposition}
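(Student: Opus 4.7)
The plan is to obtain the asserted isomorphism from a short exact sequence
\[
0 \to \Selp^\pm(E/\Q_\infty) \to \Selp^{\pm,\Sigma_0}(E/\Q_\infty) \to \prod_{\ell \in \Sigma_0} \mathcal{H}^\pm_\ell(\Q_\infty) \to 0
\]
produced by unwinding the definitions and comparing global-to-local maps. The first observation is that $\Sigma_0$ is disjoint from $\{p,\infty\}$, so for every $\ell \in \Sigma_0$ the plus/minus local condition equals the usual local condition: $\mathcal{H}^\pm_\ell(\Q_\infty) = \mathcal{H}_\ell(\Q_\infty)$. Thus the modification of the local condition at $p$ introduced in the plus/minus theory plays no role on the right-hand side of the claimed isomorphism.

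Next I would set up the map. Both Selmer groups sit inside $H^1(\Q_\Sigma/\Q_\infty, E[p^\infty])$: $\Selp^\pm$ as the kernel of projection to $\prod_{\ell \in \Sigma} \mathcal{H}^\pm_\ell(\Q_\infty)$, and $\Selp^{\pm,\Sigma_0}$ as the kernel of projection to the smaller product $\prod_{\ell \in \Sigma \setminus \Sigma_0} \mathcal{H}^\pm_\ell(\Q_\infty)$. Taking local components at the primes of $\Sigma_0$ therefore gives a well-defined map
\[
\Selp^{\pm,\Sigma_0}(E/\Q_\infty) \longrightarrow \prod_{\ell \in \Sigma_0} \mathcal{H}_\ell(\Q_\infty),
\]
whose kernel is exactly $\Selp^\pm(E/\Q_\infty)$ by comparison of the defining kernels. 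This is the snake-lemma step: it is immediate from the diagram whose vertical maps are the two global-to-local maps and whose horizontal rows are the defining exact sequences.

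The main obstacle — and really the only nontrivial content — is surjectivity of this restriction map. Equivalently, one must show that the global-to-local map
\[
H^1(\Q_\Sigma/\Q_\infty, E[p^\infty]) \longrightarrow \prod_{\ell \in \Sigma_0} \mathcal{H}_\ell(\Q_\infty)
\]
is already surjective on the full cohomology group. I would establish this using the standard Iwasawa-theoretic input: the vanishing of $H^2(\Q_\Sigma/\Q_\infty, E[p^\infty])$ (a consequence of Greenberg's general results on cyclotomic $\Z_p$-extensions, together with the fact, noted in Remark \ref{divisible}, that $\mathcal{H}_\ell(\Q_\infty)$ is divisible for $\ell \neq p$), combined with Poitou--Tate global duality. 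Crucially, because $\Sigma_0$ avoids $p$, the argument involves only tame primes and is therefore insensitive to the distinction between ordinary and supersingular reduction: it goes through exactly as in \cite{GV}, where the ordinary analogue \eqref{seldifford} is proved. Once surjectivity is in hand, the short exact sequence above yields the claimed isomorphism upon passing to the quotient.
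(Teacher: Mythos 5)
The paper offers no proof of this statement---it is quoted directly from Kim \cite{Kim09}---so your proposal is judged against what a correct argument must contain. The formal part is fine: since $p \notin \Sigma_0$, the $\pm$-modification does not touch the local conditions at $\ell \in \Sigma_0$, and the kernel of the localization map $\Selp^{\pm,\Sigma_0}(E/\Q_\infty) \to \prod_{\ell \in \Sigma_0} \mathcal{H}_\ell(\Q_\infty)$ is $\Selp^\pm(E/\Q_\infty)$ directly from the definitions.

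The gap is in the surjectivity step. Surjectivity of $H^1(\Q_\Sigma/\Q_\infty, E[p^\infty]) \to \prod_{\ell \in \Sigma_0} \mathcal{H}_\ell(\Q_\infty)$ on the full cohomology group is \emph{not} equivalent to surjectivity of its restriction to the subgroup $\Selp^{\pm,\Sigma_0}(E/\Q_\infty)$: a global class realizing prescribed images at the primes of $\Sigma_0$ need not satisfy the local conditions at the primes of $\Sigma \setminus \Sigma_0$---in particular the restrictive $\pm$-condition at $p$. What is actually needed is surjectivity of the global-to-local map
\[
H^1(\Q_\Sigma/\Q_\infty, E[p^\infty]) \longrightarrow \prod_{\ell \in \Sigma} \mathcal{H}^\pm_\ell(\Q_\infty)
\]
with the product over \emph{all} of $\Sigma$, including $p$; one then lifts an element of $\prod_{\ell \in \Sigma_0}\mathcal{H}_\ell(\Q_\infty)$ extended by zero, and the preimage automatically lands in $\Selp^{\pm,\Sigma_0}(E/\Q_\infty)$. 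This stronger surjectivity is precisely where the supersingular-specific input enters: in \cite{GV} the analogous statement is deduced from the $\Lambda$-cotorsionness of the ordinary Selmer group (Kato's theorem) via a Cassels--Poitou--Tate argument, and here one needs the $\Lambda$-cotorsionness of $\Selp^\pm(E/\Q_\infty)$ (Kobayashi's theorem, quoted in the paper) together with the self-orthogonality of the local conditions $\mathbf{H}_{n,p}^\pm$ under the Tate pairing (Remark \ref{selfdualannihilator}) to identify and kill the dual Selmer group controlling the cokernel. Your claim that the argument ``involves only tame primes and is therefore insensitive to the distinction between ordinary and supersingular reduction'' is exactly where the proof breaks down: the local condition at $p$ is essentially involved, and the required input is Kim's plus/minus machinery rather than a formal consequence of $H^2$-vanishing and duality at the tame primes.
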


\begin{remark}
We point out that this quotient is independent of the choice of $\pm$ since $p \notin \Sigma_0$, so
\[
\Selp^{\pm,\Sigma_0}(E/ \Q_\infty) / \Selp^\pm(E/ \Q_\infty) \simeq  \displaystyle\prod_{\ell \in \Sigma_0} \mathcal{H}_\ell(\Q_\infty)\simeq (\Q_p / \Z_p)^{\delta(E,\Sigma_0)}
\]
just as in the ordinary case.
\end{remark}

We define 
\[
\lambda_{E}^{\pm,\Sigma_0} := \mathrm{dim}_{\F_p}\ \Selp^{\pm,\Sigma_0}(E/ \Q_\infty)[p].
\]
Kim has shown that the strategy of Greenberg and Vatsal carries over to the supersingular setting.
\begin{proposition}[\cite{Kim09}, Corollary 2.13]\label{compareprim} Suppose $p$ is a prime of good supersingular reduction for $E_1$ and $E_2$ such that $E_1[p] \simeq E_2[p]$ as $G_\Q$-modules. Then $\mu_{E_1}^{\pm}=0$ if and only if $\mu_{E_2}^{\pm}=0$. If both $\mu$-invariants vanish, then $\lambda_{E_1}^{\pm,\Sigma_0} = \lambda_{E_2}^{\pm,\Sigma_0}$.
\end{proposition}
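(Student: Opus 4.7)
The plan is to adapt the Greenberg--Vatsal strategy to the supersingular setting by substituting Kim's plus/minus local condition for the ordinary Greenberg condition at $p$. The central observation is that the $p$-torsion of the non-primitive plus/minus Selmer group $\Selp^{\pm,\Sigma_0}(E/\Q_\infty)[p]$ should be computable purely from the residual Galois module $E[p]$ together with local conditions at the primes of $\Sigma \setminus \Sigma_0$ that themselves depend only on $E[p]$. Once this is established, the hypothesis $E_1[p] \simeq E_2[p]$ immediately produces matching residual Selmer groups, and both assertions of the proposition follow.

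Concretely, one constructs a residual non-primitive Selmer group
\[
\mathrm{Sel}^{\pm,\Sigma_0}(E[p]/\Q_\infty) := \ker\!\left( H^1(\Q_\Sigma/\Q_\infty, E[p]) \to \prod_{\ell \in \Sigma \setminus \Sigma_0} \mathcal{H}^\pm_\ell(\Q_\infty, E[p]) \right),
\]
where the local groups $\mathcal{H}^\pm_\ell(\Q_\infty, E[p])$ are obtained by replacing $E[p^\infty]$ with $E[p]$ throughout. For $\ell \neq p, \infty$ this is simply $H^1(\Q_{\infty,\eta}, E[p])$, which manifestly depends only on $E[p]|_{G_{\Q_\ell}}$. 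At $\ell = p$, the plus/minus mod-$p$ subspace is defined as the image of $\mathbf{H}^\pm_{n,p}$ under the Kummer connecting map coming from $0 \to E[p] \to E[p^\infty] \xrightarrow{p} E[p^\infty] \to 0$; Kim's self-orthogonality of $\mathbf{H}^\pm_{n,p}$ under the Tate pairing (Remark \ref{selfdualannihilator}), together with its mod-$p$ avatar, identifies this subspace as its own annihilator under the residual Tate pairing, and hence as a datum determined by $E[p]|_{\GQp}$ alone. Chasing the Kummer sequence through the Selmer defining diagram and applying the snake lemma then yields an exact sequence whose error terms are controlled by the maximal divisible submodule of $\mathcal{X}^{\pm,\Sigma_0}(E/\Q_\infty)$, i.e.\ by its $\mu$-part. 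One concludes that $\mu_E^{\pm} = 0$ if and only if $\mathrm{Sel}^{\pm,\Sigma_0}(E[p]/\Q_\infty)$ is finite, and when both hold the two groups have the same $\F_p$-dimension, namely $\lambda_E^{\pm,\Sigma_0}$.

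Since this residual Selmer group depends only on $E[p]$ as a $G_\Q$-module and on local conditions determined by $E[p]$, the hypothesis $E_1[p] \simeq E_2[p]$ produces an isomorphism
\[
\mathrm{Sel}^{\pm,\Sigma_0}(E_1[p]/\Q_\infty) \simeq \mathrm{Sel}^{\pm,\Sigma_0}(E_2[p]/\Q_\infty),
\]
which by the previous paragraph immediately gives both statements of the proposition. The principal technical obstacle is the verification at $\ell = p$: showing that $\mathbf{H}^\pm_{n,p}$ descends to a well-defined subspace of $H^1(\Q_{n,p}, E[p])$ depending only on $E[p]|_{\GQp}$. Unlike the ordinary case, where Greenberg's condition arises transparently from a filtration on $E[p^\infty]$, the plus/minus condition is built from trace compatibilities on $E(\Q_{n,p})$, so its interaction with reduction mod $p$ is genuinely delicate. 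Kim's self-duality of $\mathbf{H}^\pm_{n,p}$ under the Tate pairing is precisely the input that promotes this to a statement about $E[p]$ alone, which is why the proof in \cite{Kim09} proceeds via that route.
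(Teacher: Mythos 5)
First, a point of comparison: the paper does not actually prove this proposition --- it is imported verbatim as Corollary 2.13 of Kim \cite{Kim09}, so the ``proof'' in the paper is the citation alone. That said, your outline does track the strategy Kim actually uses, which is the Greenberg--Vatsal method transplanted to the supersingular setting: form a residual non-primitive Selmer group for $E[p]$ with plus/minus conditions at $p$, show it is determined by $E[p]$ together with local data depending only on $E[p]$, and compare it to $\Selp^{\pm,\Sigma_0}(E/\Q_\infty)[p]$ with discrepancies controlled by the $\mu$-invariant. The global architecture of your argument is therefore the right one.

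There is, however, a genuine gap at precisely the step you identify as the crux: the descent of the local condition at $p$. You argue that because $\mathbf{H}^\pm_{n,p}$ is its own annihilator under the Tate pairing, its mod-$p$ image is its own annihilator under the residual pairing and is ``hence a datum determined by $E[p]|_{\GQp}$ alone.'' That inference does not hold: self-annihilation only fixes the \emph{dimension} of the subspace (half of $\dim H^1$), not the subspace itself, and maximal isotropic subspaces are far from unique, so two congruent curves could a priori carry non-matching local conditions each of which is self-annihilating. (Note also that the self-duality statement of Remark \ref{selfdualannihilator} comes from \cite{Kim13}, postdating \cite{Kim09}, and is used in this paper only to invoke Flach's pairing in Proposition \ref{prop:lambda-parity}; it is not the engine of the present proposition.) The actual identification in \cite{Kim09} pins down the mod-$p$ image of $E^\pm(\Q_{n,p})\otimes \Q_p/\Z_p$ by an explicit analysis of the formal group: since $a_p(E)=0$, the formal group of $E$ at $p$ is isomorphic over $\Z_p$ to the standard height-two Honda formal group, so the local points, the trace conditions defining $E^\pm$, and hence the plus/minus local conditions of $E_1$ and $E_2$ can be matched compatibly with the given isomorphism $E_1[p]\simeq E_2[p]$. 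It is telling that your argument never invokes $a_p(E)=0$, the hypothesis the paper singles out as ``necessary to apply some crucial results of Kim''; any correct proof of the local comparison at $p$ must use it, and yours would, if valid, prove more than is true.
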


We thus obtain an analogue of (\ref{lambdasord}) in the supersingular setting.

\begin{proposition}\label{mainprop}
Suppose $p$ is a prime of good supersingular reduction for $E_1$ and $E_2$ such that $E_1[p] \simeq E_2[p]$ as $G_\Q$-modules. If $\mu_{E_1}^{\pm}=0$ (or equivalently $\mu_{E_2}^{\pm}=0$), then
\begin{equation}\label{deltarelation}
\lambda_{E_1}^{\pm} + \delta(E_1,\Sigma_0)= \lambda_{E_2}^{\pm} + \delta(E_2,\Sigma_0)
\end{equation}
where $\delta(E_i,\Sigma_0)$ is a non-negative integer.
\end{proposition}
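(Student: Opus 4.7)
The plan is to extract the proposition from Kim's result (Proposition~\ref{compareprim}) together with the quotient description (\ref{seldiffss}) by relating the non-primitive $\lambda$-invariant to the primitive one via a simple corank computation. The equality $\lambda_{E_1}^{\pm,\Sigma_0} = \lambda_{E_2}^{\pm,\Sigma_0}$ is already in hand, so it suffices to prove the curve-by-curve identity
\[
\lambda_{E}^{\pm,\Sigma_0} \;=\; \lambda_{E}^{\pm} + \delta(E,\Sigma_0)
\]
under the vanishing of $\mu_E^\pm$. Applying this to both $E_1$ and $E_2$ and subtracting yields (\ref{deltarelation}).

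To prove the identity, first observe that (\ref{seldiffss}) gives a short exact sequence of discrete $p$-primary abelian groups
\[
0 \longrightarrow \Selp^\pm(E/\Q_\infty) \longrightarrow \Selp^{\pm,\Sigma_0}(E/\Q_\infty) \longrightarrow (\Q_p/\Z_p)^{\delta(E,\Sigma_0)} \longrightarrow 0.
\]
By hypothesis $\mu_E^\pm = 0$, and the remark following the definition of the non-primitive invariants combined with the discussion in Section~\ref{sscase} show that $\Selp^\pm(E/\Q_\infty)$ is then $p$-divisible with $\Zp$-corank equal to $\lambda_E^\pm$. The quotient is visibly divisible of corank $\delta(E,\Sigma_0)$. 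Divisibility is preserved under extensions, so $\Selp^{\pm,\Sigma_0}(E/\Q_\infty)$ is also divisible; equivalently its dual has $\mu$-invariant $0$, and hence $\lambda_E^{\pm,\Sigma_0}$ equals the $\Zp$-corank of $\Selp^{\pm,\Sigma_0}(E/\Q_\infty)$.

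Since $\Zp$-corank is additive on short exact sequences of cofinitely generated $\Zp$-modules, taking coranks in the displayed sequence gives exactly
\[
\lambda_E^{\pm,\Sigma_0} \;=\; \lambda_E^{\pm} + \delta(E,\Sigma_0),
\]
and non-negativity of $\delta(E,\Sigma_0)$ is immediate from its definition as a $\Zp$-corank. Combining this identity for $E_1$ and for $E_2$ with the equality $\lambda_{E_1}^{\pm,\Sigma_0} = \lambda_{E_2}^{\pm,\Sigma_0}$ supplied by Proposition~\ref{compareprim} completes the proof.

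There is essentially no obstacle: the content of the proposition is already packaged into Kim's Corollary 2.13 together with (\ref{seldiffss}), and the only thing to check is the bookkeeping that divisibility propagates through the extension so that $p$-torsion dimensions and coranks agree on both ends. Everything else is a single additive corank computation.
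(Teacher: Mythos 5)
Your argument is correct and follows the same route as the paper, whose proof is the one-line observation that the claim follows immediately from (\ref{seldiffss}) and Proposition \ref{compareprim}; you have simply made explicit the corank bookkeeping (divisibility of the extension and additivity of $\Zp$-corank) that the paper leaves implicit.
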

\begin{proof}
This follows immediately from (\ref{seldiffss}) and Proposition \ref{compareprim}.
\end{proof}

We will study the integers $\delta(E_i,\Sigma_0)$ in more detail in the next section.

\section{Local Galois cohomology: $\delta(E,\ell)$}

In this section we study the local Galois cohomology groups
\[
\mathcal{H}_\ell(\Q_\infty) \simeq (\Q_p / \Z_p)^{\delta(E,\ell)} 
\]
for primes $\ell \neq p$ (see Remark \ref{divisible}). Once we have determined these values, we can compute the values
\[
\delta(E,\Sigma_0) = \sum_{\ell \in \Sigma_0} \delta(E,\ell)
\]
from equation (\ref{deltarelation}). In fact, we will ultimately be interested in this value mod $2$.

For each prime $\eta \mid \ell$ of $\Q_\infty$, let $\tau(E,\eta)$ be defined by
\[
H^1(\Q_{\infty,\eta},E[p^\infty]) \simeq (\Q_p / \Z_p)^{\tau(E,\eta)},
\]
so that by (\ref{hldefn}) we have
\[
\delta(E,\ell) = \displaystyle\sum_{\eta \mid \ell} \tau(E,\eta).
\]
The following lemma is proved in a discussion in \cite[Section 3]{Shek}.
\begin{lemma}\label{taulemma}
For any $\eta \mid \ell$ of $\Q_\infty$,
\[
\delta(E,\ell) \equiv \tau(E,\eta) \mod 2.
\]
\end{lemma}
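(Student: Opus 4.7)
The plan is to reduce the lemma to an elementary count of primes of $\Q_\infty$ lying above $\ell$. First I would observe that $G_\infty$ acts transitively on the set of primes $\eta \mid \ell$, and that any $\sigma \in G_\infty$ sending $\eta_1$ to $\eta_2$ induces an isomorphism of local Galois modules and hence an isomorphism
\[
H^1(\Q_{\infty,\eta_1}, E[p^\infty]) \simeq H^1(\Q_{\infty,\eta_2}, E[p^\infty]).
\]
In particular the integer $\tau(E,\eta)$ depends only on $\ell$, so
\[
\delta(E,\ell) = \sum_{\eta \mid \ell} \tau(E,\eta) = n_\ell \cdot \tau(E,\eta),
\]
where $n_\ell$ denotes the number of primes of $\Q_\infty$ above $\ell$, and the congruence asserted by the lemma reduces to the statement that $n_\ell$ is odd.

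Next I would verify the parity of $n_\ell$ using the structure of $G_\infty \simeq \Z_p$. Since $n_\ell$ equals the index $[G_\infty : D_\eta]$ of the decomposition group at any $\eta \mid \ell$, and every closed subgroup of $\Z_p$ is either trivial or of the form $p^k \Z_p$ for some integer $k \geq 0$, there are two cases. The case $D_\eta = \{0\}$ would force $n_\ell$ to be infinite, contradicting the finite-decomposition fact recorded in Remark \ref{divisible}. Therefore $D_\eta = p^k \Z_p$ for some $k \geq 0$, giving $n_\ell = p^k$. Because $p$ is an odd prime, $p^k$ is odd and the congruence $\delta(E,\ell) \equiv \tau(E,\eta) \pmod 2$ follows immediately.

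Every ingredient in this argument (transitivity of the Galois action on primes, classification of closed subgroups of $\Z_p$, and finite decomposition of $\ell \neq p$ in $\Q_\infty$) is either standard or already invoked in the excerpt, so I do not anticipate any significant obstacle. The only point that merits a sentence of justification is the first one: one must check that the isomorphism induced by $\sigma$ preserves the $\Z_p$-corank, but this is automatic since the isomorphism is one of $p$-power torsion $\Z_p$-modules.
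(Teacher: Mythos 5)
Your proof is correct and follows essentially the same route as the paper: both use the $G_\infty$-conjugation isomorphism on local cohomology to show $\tau(E,\eta)$ depends only on $\ell$, and both identify the number of primes above $\ell$ with the index of the decomposition group in $G_\infty \simeq \Z_p$, which is an odd prime power. Your extra detail on the classification of closed subgroups of $\Z_p$ just makes explicit what the paper compresses into the word ``open.''
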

\begin{proof}
Let $\eta_1, \eta_2$ be two primes of $\Q_\infty$ over $\ell$. Any map $\phi \in G_\infty = \mathrm{Gal}(\Q_\infty / \Q)$ which satisfies $\phi(\eta_1) = \eta_2$ induces an isomorphism $H^1(\Q_{\infty,\eta_1},E[p^\infty]) \simeq H^1(\Q_{\infty,\eta_2},E[p^\infty])$, so $\tau(E,\eta_1) = \tau(E,\eta_2)$. Let $s_\ell$ denote the number of primes of $\Q_\infty$ over $\ell$. Then this implies $\delta(E,\ell) = s_l \cdot \tau(E,\eta)$ for any $\eta \mid \ell$. But $s_\ell$ is equal to the index of the {\em open} decomposition group $D_\ell$ of $\ell$ in $\Q_\infty$, hence $s_\ell$ is a power of $p$. In particular, $s_\ell$ is odd, which proves the lemma.
\end{proof}

To compute $\tau(E,\ell)$, we first need to introduce some notation. Let $T_p$ denote the $p$-adic Tate module of $E$, and let $V_p = T_p \otimes \Q_p$. Write $\Q_\ell^\mathrm{unr}$ for the maximal unramified extension of $\Q_\ell$. Let $I_\ell:= \mathrm{Gal}(\bar{\Q}_\ell / \Q_\ell^\mathrm{unr})$ denote the inertia subgroup of $\mathrm{Gal}(\bar{\Q}_\ell / \Q_\ell)$,  and let $\mathrm{Frob}_\ell$ denote the arithmetic Frobenius automorphism of $\mathrm{Gal}(\Q_\ell^\mathrm{unr} / \Q_\ell)$. Finally, let $(V_p)_{I_\ell}$ denote the maximal quotient of $V_p$ on which $I_\ell$ acts trivially. The following proposition of Greenberg-Vatsal explains how to compute $\tau(E,\eta)$. 

\begin{proposition}\label{taucomputation}
Let $P_\ell(X) = \mathrm{det}((1-\mathrm{Frob}_\ell X \vert_{(V_p)_{I_\ell}})) \in \Z_p [X]$. Let $\ \tilde{}$ denote reduction mod $p$. Then $\tau(E,\eta)$ is equal to the multiplicity of $X=\tilde{\ell}^{-1}$ as a root of $\tilde{P}_\ell \in \F_p[X]$.
\end{proposition}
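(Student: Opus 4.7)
The plan is to compute $\tau(E,\eta)$ in three stages: reduce the $H^1$ computation to a local $H^0$ computation, identify that $H^0$ with a Frobenius eigenspace on $V_p^{I_\ell}$, and then use the Weil pairing to translate the answer into the statement about $(V_p)_{I_\ell}$ appearing in the proposition.

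First, since $G_{\Q_{\infty,\eta}}$ has $p$-cohomological dimension $1$ (as in Remark \ref{divisible}), $H^2(\Q_{\infty,\eta}, E[p^\infty]) = 0$. The local Euler characteristic vanishes at $\ell \neq p$, so $\mathrm{corank}_{\Z_p} H^1 = \mathrm{corank}_{\Z_p} H^0$, reducing matters to computing $\mathrm{corank}_{\Z_p} E(\Q_{\infty,\eta})[p^\infty]$. Since $\Q_{\infty,\eta}/\Q_\ell$ is the unramified $\Z_p$-extension and $\mathrm{Frob}_\ell^{p^n}$ topologically generates $G_{\Q_{n,\eta}}/I_\ell$, this corank equals the corank of $\bigcup_n \ker(\phi^{p^n} - 1 \mid E[p^\infty]^{I_\ell})$, where $\phi := \mathrm{Frob}_\ell$. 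A short dévissage using $0 \to T_p \to V_p \to V_p/T_p \to 0$ shows that the divisible part of $E[p^\infty]^{I_\ell}$ has $\Z_p$-corank $\dim_{\Q_p} V_p^{I_\ell}$, with $\phi$ acting by the same eigenvalues as on $V_p^{I_\ell}$. A lifting-the-exponent calculation then shows that an eigenvalue $\alpha$ of $\phi$ contributes $1$ to the above corank exactly when $\tilde\alpha = 1$ in $\barFp$; hence $\tau(E,\eta)$ equals the multiplicity of the eigenvalue $1$ of $\tilde\phi$ on $V_p^{I_\ell}$.

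To match the proposition's formulation in terms of $(V_p)_{I_\ell}$, I would invoke the Weil pairing $V_p \times V_p \to \Q_p(1)$. Since $I_\ell$ acts trivially on $\Q_p(1)$ for $\ell \neq p$, this descends to a non-degenerate pairing $V_p^{I_\ell} \times (V_p)_{I_\ell} \to \Q_p(1)$, yielding a $\phi$-equivariant isomorphism $V_p^{I_\ell} \simeq \mathrm{Hom}_{\Q_p}((V_p)_{I_\ell}, \Q_p(1))$. If $\{\beta_i\}$ are the Frobenius eigenvalues on $(V_p)_{I_\ell}$, then the eigenvalues on $V_p^{I_\ell}$ are $\{\ell/\beta_i\}$. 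Thus the eigenvalue $1$ mod $p$ on $V_p^{I_\ell}$ corresponds to the eigenvalue $\tilde\ell$ mod $p$ on $(V_p)_{I_\ell}$, which is precisely the multiplicity of $X = \tilde\ell^{-1}$ as a root of $\tilde P_\ell$.

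The main obstacle is the case of non-semisimple inertia action (multiplicative reduction), where $V_p^{I_\ell}$ and $(V_p)_{I_\ell}$ are only abstractly isomorphic as $\Q_p$-vector spaces and carry genuinely different Frobenius actions; the Weil-pairing step is essential for producing the twist by $\ell$ that converts ``eigenvalue $1$ on invariants'' into ``eigenvalue $\tilde\ell$ on coinvariants.'' Without it the final identification with $X = \tilde\ell^{-1}$ as a root of $\tilde P_\ell$ would not come out correctly.
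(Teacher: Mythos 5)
Your argument is essentially correct, but note that the paper does not prove this proposition at all: its ``proof'' is the single line ``This is \cite[Proposition 2.4]{GV}.'' So you have supplied an actual argument where the paper supplies a citation, and your route differs from the one in Greenberg--Vatsal. Their proof works directly with $H^1$: since $\mathrm{Gal}(\Q_\ell^{\mathrm{unr}}/\Q_{\infty,\eta})$ has pro-order prime to $p$, inflation--restriction identifies $H^1(\Q_{\infty,\eta},E[p^\infty])$ with the Frobenius-invariants of $H^1(I_\ell,E[p^\infty])\simeq (E[p^\infty]_{I_\ell})(-1)$, so the coinvariants $(V_p)_{I_\ell}$ and the twist by $\ell^{-1}$ (hence the evaluation point $X=\tilde\ell^{-1}$) appear immediately, with no duality needed. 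You instead pass to $H^0$ via the vanishing of $H^2$ and of the local Euler characteristic, compute the corank of $E[p^\infty]^{G_{\Q_{\infty,\eta}}}$ by the union-of-kernels/lifting-the-exponent argument, and then convert invariants to coinvariants with the Weil pairing. Both routes land on the same answer (your Weil-pairing twist by $\ell$ exactly reproduces the Tate twist $(-1)$ in the other approach), and your treatment of the non-semisimple multiplicative case is right. The one step you should tighten is the appeal to the local Euler characteristic ``at infinite level'': the formula $\mathrm{corank}\,H^1=\mathrm{corank}\,H^0+\mathrm{corank}\,H^2$ is a statement about finite extensions of $\Q_\ell$, so you need to apply it at each layer $\Q_{n,\eta}$ and pass to the limit, checking that the restriction maps have finite kernels and that the $H^2$-contribution dies in the limit; as written this is asserted rather than justified. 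If you are content to quote the literature, the shortest correct proof remains the paper's citation of \cite[Proposition 2.4]{GV}.
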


\begin{proof}
This is \cite[Proposition 2.4]{GV}.
\end{proof}

We can now compute the parity of $\tau(E,\ell)$ for all $\ell \in \Sigma_0$. First we deal with primes of good reduction.

\begin{lemma}\label{taugood}
If $\ell$ is a prime of good reduction, then $\tau(E,\eta) \equiv 1 \mod 2$ if and only if $\ell+ 1 \equiv a_\ell(E) \mod p$ and $\ell^{-1} \not\equiv 1 \mod p$.
\end{lemma}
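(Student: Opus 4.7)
My approach is to apply Proposition \ref{taucomputation} directly and then unwind what it says in the good reduction case. First I would note that because $E$ has good reduction at $\ell \neq p$, the $p$-adic representation $V_p$ is unramified at $\ell$ (by the criterion of N\'eron--Ogg--Shafarevich), so $I_\ell$ acts trivially and $(V_p)_{I_\ell} = V_p$ is a two-dimensional $\Q_p$-vector space. Hence $P_\ell(X)$ is the usual Euler factor
\[
P_\ell(X) = 1 - a_\ell(E) X + \ell X^2 \in \Z_p[X],
\]
whose reduction mod $p$ is $\tilde{P}_\ell(X) = 1 - \tilde{a}_\ell X + \tilde{\ell} X^2 \in \F_p[X]$.

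Next I would compute when $X = \tilde{\ell}^{-1}$ is a root of $\tilde{P}_\ell$. Substituting and clearing denominators by multiplying through by $\tilde{\ell}$, one gets
\[
\tilde{\ell}\,\tilde{P}_\ell(\tilde{\ell}^{-1}) = \tilde{\ell} - \tilde{a}_\ell + 1,
\]
so $\tilde{\ell}^{-1}$ is a root of $\tilde{P}_\ell$ if and only if $\ell + 1 \equiv a_\ell(E) \pmod p$. If this congruence fails, then $\tau(E,\eta) = 0$ is even, so the equivalence holds in this case.

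Now suppose $\ell + 1 \equiv a_\ell(E) \pmod p$, so that $\tilde{\ell}^{-1}$ is a root. Since $\tilde{P}_\ell$ has degree $2$, the multiplicity $\tau(E,\eta)$ is either $1$ or $2$, and it is odd precisely when it equals $1$, i.e.\ when $\tilde{\ell}^{-1}$ is a \emph{simple} root. It is a double root if and only if $\tilde{P}_\ell(X) = \tilde{\ell}(X - \tilde{\ell}^{-1})^2$, which, comparing the constant term to $1$, happens exactly when $\tilde{\ell} \cdot \tilde{\ell}^{-2} = 1$, i.e.\ when $\ell^{-1} \equiv 1 \pmod p$ (equivalently $\ell \equiv 1 \pmod p$). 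Combining the two cases yields: $\tau(E,\eta)$ is odd iff $\ell + 1 \equiv a_\ell(E) \pmod p$ and $\ell^{-1} \not\equiv 1 \pmod p$, as claimed.

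There is no real obstacle here; the only subtlety is to remember that $\ell \neq p$ so that $\tilde{\ell}$ is a unit in $\F_p$, and to verify the multiplicity condition via the discriminant/double root computation rather than trying to factor $\tilde{P}_\ell$ explicitly. The result then passes from $\tau(E,\eta)$ to $\delta(E,\ell)$ modulo $2$ via Lemma \ref{taulemma}, if needed later.
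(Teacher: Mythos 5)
Your proposal is correct and follows essentially the same route as the paper: both identify $(V_p)_{I_\ell}=V_p$ via N\'eron--Ogg--Shafarevich, take $P_\ell(X)=1-a_\ell(E)X+\ell X^2$, and apply Proposition \ref{taucomputation}, with the only cosmetic difference being that you test the root and its multiplicity by direct substitution and the perfect-square form, while the paper uses the product and sum of the roots. Note only that your ``comparing the constant term'' step for the double-root criterion is sufficient here because, under the standing congruence $\ell+1\equiv a_\ell(E)\bmod p$, the condition $\ell\equiv 1\bmod p$ forces the linear coefficient to match as well.
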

\begin{proof}
If $\ell$ is a prime of good reduction, then $(V_p)_{I_\ell}=V_p$ is two-dimensional, and it is a standard result that $P_\ell$ is given by
\begin{equation*}\label{frobcharbad}
P_\ell(X) = 1 - a_\ell(E)X + \ell X^2.
\end{equation*}
%Equivalently, we may study the roots of the monic polynomial
%\begin{equation}\label{frobchar}
%P_\ell(X) = X^2 - \ell^{-1}a_\ell(E)X + \ell^{-1}.
%\end{equation}
By Proposition \ref{taucomputation}, $\tau(E,\eta)$ is odd if and only if $\tilde{\ell}^{-1}$ is a simple root of $\tilde{P}_\ell(X)$. 

Since the product of the roots is $\ell^{-1}$, we have
\[
\tilde{\ell}^{-1} \ \text{is a root} \Longleftrightarrow 1 \ \text{is a root},
\]
and since the sum of the roots is $\ell^{-1} a_\ell(E)$, this implies
\[
\tilde{\ell}^{-1} \ \text{is a root} \Longleftrightarrow \ell + 1 \equiv a_\ell(E) \mod p.
\]
In this case, we see that $\tilde{\ell}^{-1}$ is a {\em simple} root if and only if $\ell^{-1} \not\equiv 1 \mod p$, which proves the lemma.

\end{proof}

We now handle primes of bad reduction.

\begin{lemma}\label{taulemma2}
$\tau(E,\eta) \equiv 1 \mod 2$ if and only if one of the following is true:
\begin{enumerate}
\item $\ell$ is a prime of split multiplicative reduction and $\ell^{-1} \equiv 1 \mod p$ 
\item $\ell$ is a prime of nonsplit multiplicative reduction and $\ell^{-1} \equiv -1 \mod p$ 
\end{enumerate}
\end{lemma}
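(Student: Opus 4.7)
The plan is to run the same analysis as in Lemma \ref{taugood}, but with the value of $(V_p)_{I_\ell}$ recomputed in each of the three possible bad-reduction types (additive, split multiplicative, nonsplit multiplicative). In the multiplicative cases the relevant polynomial $P_\ell$ will turn out to have degree $1$, so any root of $\tilde{P}_\ell$ is automatically simple and $\tau(E,\eta) \in \{0,1\}$; the parity question thus reduces to whether $\tilde{\ell}^{-1}$ is a root of $\tilde{P}_\ell$ at all.

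First I would dispose of the additive case. There $V_p^{I_\ell} = 0$, and since for a finite-dimensional representation the dimensions of invariants and coinvariants agree, $(V_p)_{I_\ell} = 0$ as well. Hence $P_\ell(X) = 1$, so $\tilde{\ell}^{-1}$ is never a root and $\tau(E,\eta) = 0$ is even. Consequently no prime of additive reduction contributes to the parity, consistent with the lemma's statement.

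For the multiplicative cases I would invoke the Tate curve description: over $\Q_\ell$ there is a short exact sequence $0 \to \Q_p(1) \to V_p \to \Q_p \to 0$ on which inertia acts trivially on the sub and the quotient (the cyclotomic character is unramified at $\ell \neq p$) but nontrivially, through unipotent monodromy, on the middle. A direct computation with $\sigma - 1$ for a generator $\sigma$ of the $p$-part of tame inertia identifies the image with the $\Q_p(1)$ subspace, so $(V_p)_{I_\ell}$ is one-dimensional with Frobenius eigenvalue $a_\ell(E)$, which is $+1$ for split multiplicative reduction and $-1$ for nonsplit. Thus $P_\ell(X) = 1 - a_\ell(E) X$, and Proposition \ref{taucomputation} yields $\tau(E,\eta) = 1$ precisely when $a_\ell(E) \equiv \ell \mod p$; using that $\ell \equiv \pm 1 \mod p$ iff $\ell^{-1} \equiv \pm 1 \mod p$, this unpacks into the two subcases stated. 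There is no real obstacle here; the argument is bookkeeping once the Tate picture and the conventions for $a_\ell(E)$ in each multiplicative case are in hand.
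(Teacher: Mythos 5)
Your argument is essentially the paper's proof: additive reduction gives $(V_p)_{I_\ell}=0$, while in the multiplicative cases $(V_p)_{I_\ell}$ is the one-dimensional unramified quotient with Frobenius eigenvalue $a_\ell(E)=\pm 1$, so $P_\ell(X)=1-a_\ell(E)X$ and Proposition \ref{taucomputation} unwinds to exactly the two stated congruences. Your Tate-curve computation in the multiplicative case is correct (and supplies a justification the paper only asserts), and the observation that a degree-one $\tilde{P}_\ell$ has only simple roots, so that $\tau(E,\eta)\in\{0,1\}$, is the same reduction the paper uses.

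The one step that needs repair is your justification in the additive case. It is \emph{not} true in general that a finite-dimensional representation of a group has invariants and coinvariants of the same dimension: a non-split extension $0 \to \mathbf{1} \to V \to \chi \to 0$ with $\chi$ a nontrivial character has $\dim V^{G}=1$ but $V_{G}=0$, since the only proper nonzero submodule is the trivial line and hence the only nontrivial proper quotient is $\chi$. (The claim is correct for a single operator, by rank--nullity applied to $g-1$, and for semisimple actions, but not for arbitrary group actions.) The conclusion $(V_p)_{I_\ell}=0$ for additive reduction is nevertheless correct; to justify it, either use that the Weil pairing gives $V_p \simeq V_p^{\vee}(1)$ with the twist unramified at $\ell \neq p$, so $\dim (V_p)_{I_\ell} = \dim V_p^{I_\ell}$, and the latter vanishes by the N\'eron--Ogg--Shafarevich/semistability criterion; or treat the two additive subcases directly (potentially good: inertia acts through a finite group, hence semisimply and with no trivial constituent; potentially multiplicative: $V_p\vert_{I_\ell}$ is a ramified quadratic twist of a unipotent extension, which admits no trivial quotient).
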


\begin{proof}
If $\ell$ is a prime of additive reduction, then $(V_p)_{I_\ell}=0$, so Proposition \ref{taucomputation} shows that $\tau(E,\eta) = 0$.

Now suppose $\ell$ is a prime of multiplicative reduction. In this case, $(V_p)_{I_\ell}$ is one-dimensional, and the trace of $\mathrm{Frob}_{\ell}$ on $(V_p)_{I_\ell}$ is still given by $a_\ell(E)$, so that $P_\ell(X) = 1 - a_\ell(E)X$. We have
\[
a_l(E) = \begin{cases} 
       1 & \mathrm{if}\ \ell \ \mathrm{is}\ \mathrm{split,} \\
%       & \\
      -1 & \mathrm{if}\ \ell \ \mathrm{is}\ \mathrm{nonsplit.} \\
\end{cases}
\]
It follows that, in the notation of Proposition \ref{taucomputation},
\[
P_\ell(\ell^{-1}) = 1 \mp \ell^{-1}
\]
with $\pm$ determined by whether $\ell$ is split or nonsplit. The lemma now follows from Proposition \ref{taucomputation}. 
\end{proof}

We summarize this section of the paper.

\begin{proposition}\label{deltacomp}
For $\ell \in \Sigma_0$, we have $\delta(E,\ell) \equiv 1 \mod 2$ if and only if one of the following is true:
\begin{enumerate}
\item $\ell$ is a prime of good reduction such that $\ell+ 1 \equiv a_\ell(E) \mod p$ and $\ell^{-1} \not\equiv 1 \mod p$
\item $\ell$ is a prime of split multiplicative reduction and $\ell^{-1} \equiv 1 \mod p$ 
\item $\ell$ is a prime of nonsplit multiplicative reduction and $\ell^{-1} \equiv -1 \mod p$ 
\end{enumerate}
\end{proposition}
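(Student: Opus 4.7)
The plan is to observe that this proposition is simply a consolidation of the work already done in Lemmas \ref{taulemma}, \ref{taugood}, and \ref{taulemma2}, together with the decomposition
\[
\delta(E,\Sigma_0) = \sum_{\ell \in \Sigma_0} \delta(E,\ell).
\]
The heart of the argument has already been carried out; what remains is merely to assemble the cases.

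First I would invoke Lemma \ref{taulemma}, which tells us that for any prime $\eta$ of $\Q_\infty$ above $\ell$,
\[
\delta(E,\ell) \equiv \tau(E,\eta) \pmod 2.
\]
This reduces the question of the parity of $\delta(E,\ell)$ to the parity of $\tau(E,\eta)$ for a single chosen prime $\eta \mid \ell$.

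Next I would split into cases according to the reduction type of $E$ at $\ell$. If $\ell$ is a prime of good reduction, then Lemma \ref{taugood} gives exactly case (1) of the proposition. If $\ell$ is a prime of bad reduction, then Lemma \ref{taulemma2} handles the multiplicative cases, providing cases (2) and (3), while the additive case yields $\tau(E,\eta) = 0$ (as shown in the proof of Lemma \ref{taulemma2}) and so contributes nothing to the list of situations where $\delta(E,\ell)$ is odd.

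There is no serious obstacle here; the only thing to be careful about is making sure the three enumerated conditions are genuinely exhaustive of the cases in which $\tau(E,\eta)$ is odd, and that they are mutually exclusive (since a single $\ell$ has a unique reduction type). Both of these follow immediately from the dichotomy good/bad and the subdivision of bad reduction into additive, split multiplicative, and nonsplit multiplicative. The proposition then follows by combining the three lemmas.
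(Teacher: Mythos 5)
Your proposal is correct and follows essentially the same route as the paper, which simply combines Lemma \ref{taulemma} with the case analyses of Lemmas \ref{taugood} and \ref{taulemma2}. If anything, your version is slightly more careful than the paper's one-line proof, which cites only Lemmas \ref{taulemma} and \ref{taulemma2} and omits the explicit reference to Lemma \ref{taugood} needed for case (1).
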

\begin{proof}
This follows immediately from Lemmas \ref{taulemma} and \ref{taulemma2}.
\end{proof}

Given two curves $E_1, E_2$ which are congruent mod $p$, let $\Sigma_0$ be a set of primes as in (\ref{sigma0}).

\begin{definition}\label{sidef}
Define $S_i$ to be the subset of $\ell \in \Sigma_0$ such that one of the following is true:
\begin{enumerate}
\item $\ell$ is a prime of good reduction for $E_i$ such that $\ell+ 1 \equiv a_\ell(E) \mod p$ and $\ell^{-1} \not\equiv 1 \mod p$
\item $\ell$ is a prime of split multiplicative reduction for $E_i$ and $\ell^{-1} \equiv 1 \mod p$ 
\item $\ell$ is a prime of nonsplit multiplicative reduction for $E_i$ and $\ell^{-1} \equiv -1 \mod p$ 
\end{enumerate}
\end{definition}

Thus, $S_i$ is the set of $\ell \in \Sigma_0$ for which $\delta(E_i,\ell) \equiv 1 \mod p$.

\section{Main Result}\label{section-main}

In order to prove our main result, we will need the following analogue of \cite[Proposition 3.10]{Gr}. We follow Greenberg's argument closely. 

\begin{proposition}\label{prop:lambda-parity}
Let $E$ be an elliptic curve, defined over $\Q$ and supersingular at $p$. %Suppose $\mu_E^+ = \mu_E^-$.

Then 
\[
\mathrm{corank}_{\Z_p}\ \Selp(E / \Q) \equiv \lambda_E^\pm \ \mod 2.
\]
\end{proposition}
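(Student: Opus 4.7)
The plan is to follow Greenberg's proof of \cite[Proposition 3.10]{Gr} in the ordinary case, substituting the plus/minus Selmer groups and Kim's supersingular control theorem at the appropriate places. The argument splits into three main pieces: (i) a translation between $\lambda_E^\pm$ and the $\F_p$-dimension of $\Selp(E/\Q)[p]$ via control; (ii) a pairing argument to pin down the parity of $\dim_{\F_p}\Selp(E/\Q)[p]$ in terms of $\mathrm{corank}_{\Z_p}\Selp(E/\Q)$; and (iii) a matching of parities between the two $\F_p$-dimensions.

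First I would exploit the standing hypothesis $\mu_E^{\pm}=0$: combined with Kobayashi's cotorsion theorem, this implies that $\Selp^\pm(E/\Q_\infty)$ is a divisible group, so $\lambda_E^\pm = \mathrm{corank}_{\Z_p}\Selp^\pm(E/\Q_\infty) = \dim_{\F_p}\Selp^\pm(E/\Q_\infty)[p]$, as already noted in Section \ref{sscase}. Next I would invoke a supersingular control theorem of Kim \cite{Kim13} to compare $\Selp^\pm(E/\Q_\infty)^{G_\infty}$ with $\Selp^\pm(E/\Q)$; by Remark \ref{Sel0} the latter is simply $\Selp(E/\Q)$. This yields a relation of the form
\[
\dim_{\F_p}\Selp(E/\Q)[p] = \dim_{\F_p}\bigl(\Selp^\pm(E/\Q_\infty)[p]\bigr)^{G_\infty} + O(1),
\]
with a controlled error term coming from the local conditions away from $p$ (which vanishes after one enlarges $\Sigma$ appropriately).

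The heart of the argument is the parity comparison, and this is where Remark \ref{selfdualannihilator} is indispensable. Since the local conditions $\mathbf{H}_{n,p}^\pm$ are their own annihilators under the Tate pairing, a theorem of Flach produces a $G_\Q$-equivariant, non-degenerate, skew-symmetric pairing on $\Selp(E/\Q)/\Selp(E/\Q)_{\mathrm{div}}$. Writing $\Selp(E/\Q)\cong (\Q_p/\Z_p)^r\oplus F$ with $r=\mathrm{corank}_{\Z_p}\Selp(E/\Q)$ and $F$ a finite $p$-group, the existence of this alternating form on $F$ forces $F\cong B\oplus B$ (or $B\oplus B^\vee$) for some $B$, and in particular $\dim_{\F_p}F[p]\equiv 0\pmod 2$. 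Hence $\dim_{\F_p}\Selp(E/\Q)[p]\equiv r\pmod 2$. Combining this with step (i), and noting that for the finite $\F_p[G_\infty]$-module $M=\Selp^\pm(E/\Q_\infty)[p]$ the natural map $M^{G_\infty}\to M_{G_\infty}$ has equal source and target dimensions and sits in a sequence compatible with the same Flach-type pairing at infinite level, I expect the parity to propagate from $\dim_{\F_p}M^{G_\infty}$ to $\dim_{\F_p}M=\lambda_E^\pm$.

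The main obstacle is the last parity propagation: the control-theoretic identification of $\Selp^\pm(E/\Q_\infty)^{G_\infty}[p]$ with $\Selp(E/\Q)[p]$ need not preserve $\F_p$-dimension on the nose, and verifying that the discrepancy is always even in the supersingular setting requires a careful interplay between Kim's explicit description of the local plus/minus conditions and the Galois-equivariance of Flach's pairing. In the ordinary setting Greenberg handles this via divisibility and the explicit structure of $\mathcal{H}_\ell(\Q_\infty)$; here one must check that the corresponding $\mathcal{H}_\ell^\pm(\Q_\infty)$ behave the same way at $\ell=p$, which is exactly what the self-dual annihilator property of Remark \ref{selfdualannihilator} is designed to supply.
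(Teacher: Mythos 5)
Your proposal names the right ingredients (Greenberg's \cite[Proposition 3.10]{Gr} as a template, a supersingular control theorem, and Flach's pairing via the self-annihilating local conditions of Remark \ref{selfdualannihilator}), but the assembly has a genuine gap at exactly the point you flag as ``the main obstacle,'' and that obstacle is not a technicality one can wave at --- it is the content of the proof. You try to pass directly from level $0$ to level $\infty$: you compare $\Selp(E/\Q)$ with $\Selp^\pm(E/\Q_\infty)^{G_\infty}$ up to an ``$O(1)$'' error, and then hope that the parity of $\dim_{\F_p}M^{G_\infty}$ propagates to $\dim_{\F_p}M$ for $M=\Selp^\pm(E/\Q_\infty)[p]$. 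Neither step can work as stated: an error term that is merely bounded is useless for a parity claim, and for a finite $\Lambda/p$-module the invariants and the whole module need not have the same $\F_p$-dimension parity (take $M=\Lambda/(p,T^2)$, where $\dim M^{G_\infty}=1$ but $\dim M=2$). The observation that $\dim M^{G_\infty}=\dim M_{G_\infty}$ is true but irrelevant to bridging $M^{G_\infty}$ and $M$. Likewise, applying Flach's pairing only to $\Selp(E/\Q)/\Selp(E/\Q)_{\mathrm{div}}$ gives a correct but unneeded statement about $\dim_{\F_p}\Selp(E/\Q)[p]$; the quantity you must reach, $\lambda_E^\pm$, lives at infinite level and your argument never builds the bridge.

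The missing idea is to route the comparison through the intermediate layers $\Q_n$ and to split the parity claim into two congruences, each with its own mechanism. First, writing $T_n^\pm$ for the divisible part of $\Selp^\pm(E/\Q_n)$ and $\bar{\lambda}^\pm$ for the stabilized value of $\mathrm{corank}_{\Z_p}\Selp^\pm(E/\Q_n)$ (bounded by $\lambda_E^\pm$ via Kobayashi's control theorem \cite[Theorem 9.3]{Kob}), the restriction maps $T_0^\pm\to(T_n^\pm)^{G_n}$ have finite kernel and cokernel, and the nontrivial irreducible $\Q_p$-representations of $G_n\simeq\Z/p^n\Z$ all have degree divisible by $p-1$, which is even; this forces $\mathrm{corank}_{\Z_p}\Selp(E/\Q)\equiv\bar{\lambda}^\pm\bmod 2$. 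Second, Flach's pairing is applied not at level $0$ but to the finite quotients $U_n^\pm=S_n^\pm/T_n^\pm$ at \emph{every} level $n$, forcing each $|U_n^\pm|$ to be a square; combined with the injectivity of $U_n^\pm\to U_\infty^\pm$ (which follows from the injectivity of $S_n^\pm\to S_\infty^\pm$, \cite[Lemma 9.1]{Kob}), Greenberg's argument shows $\lambda_E^\pm-\bar{\lambda}^\pm=\mathrm{corank}_{\Z_p}U_\infty^\pm$ is even. Chaining the two congruences through $\bar{\lambda}^\pm$ gives the proposition; without the finite layers and the evenness of $p-1$, there is no way to control the parity discrepancy you correctly identified.
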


\begin{proof}

By Kobayashi's control theorem \cite[Theorem 9.3]{Kob}, for $n \geq 0$ we have that $\mathrm{corank}_{\Z_p} \ \Selp^\pm (E/\Q_n)$ is bounded above by $\lambda_E^\pm$. Let $\bar{\lambda}^\pm$ denote the maximum of these $\Z_p$-coranks, so that $\mathrm{corank}_{\Z_p}\ \Selp^\pm(E/\Q_n) = \bar{\lambda}^\pm$ for all $n \gg 0$. Let $S_n^\pm=\Selp^\pm(E / \Q_n)$, let $T_n^\pm = (S_n^\pm)_{\mathrm{div}}$, and let $U_n^\pm=S_n^\pm/T_n^\pm$. The restriction maps
\[
S_0^\pm \to (S_n^\pm)^{G_n} \quad \mathrm{and} \quad T_0^\pm \to (T_n^\pm)^{G_n}
\]
have finite kernels and cokernels. Since the nontrivial $\Q_p$-irreducible representations of $G_n$ have degree divisible by $p-1$, we have
\[
\mathrm{corank}_{\Z_p}\ T_n^\pm \equiv \mathrm{corank}_{\Z_p}\ T_0^\pm \mod (p-1),
\]
 and since $p$ is odd this gives a congruence mod $2$. Since $S_0^\pm = \Selp (E/ \Q)$ (see Remark \ref{Sel0}), it follows that
\begin{equation}\label{lambdapar}
\mathrm{corank}_{\Z_p}\ \Selp(E / \Q) \equiv \bar{\lambda}^\pm \ \mod 2.
\end{equation}

Let $S_\infty^\pm = \Selp^\pm(E / \Q_\infty)$ and $T_\infty^\pm = \varinjlim T_n$. By the definition of $\bar{\lambda}^\pm$ we have $T^\pm_\infty \simeq (\Q_p / \Z_p)^{\bar{\lambda}^\pm}$. Write $U^\pm_\infty = \varinjlim U_n^\pm = S_n^\pm / T_n^\pm.$ For $n \gg 0$ the map $T_n^\pm \to T_\infty^\pm$ is surjective, so
\[
|\mathrm{ker}(U_n^\pm \to U^\pm_\infty)| \leq |\mathrm{ker}(S_n^\infty \to S_\infty^\pm)|.
\]
By \cite[Lemma 9.1]{Kob}, the right-hand side is zero, so the maps $U_n^\pm \to U^\pm_\infty$ are also injective.  By Remark \ref{selfdualannihilator}, Flach's generalized Cassels-Tate pairing 
\[
U_n^\pm \times U_n^\pm \to \Q_p / \Z_p
\]
forces $|U_n^\pm|$ to be a square \cite[Corollary to Theorem 2]{flach}; combined with the injectivity of $U_n^\pm \to U^\pm_\infty$, it follows (as in the proof of \cite[Proposition 3.10]{Gr}) that $u = \mathrm{corank}_{\Z_p}\ U^\pm_\infty$ is even. Since $u = \lambda_E^\pm - \bar{\lambda}^\pm$, we have
\[
\lambda_E^\pm \equiv \bar{\lambda}^\pm \mod 2.
\]
Combining this with (\ref{lambdapar}) gives the desired result.
\end{proof}

Before continuing, we note an interesting corollary which we have not seen in the literature.

\begin{corollary}
For $E$ and $p$ as in the proposition, we have
\[
\lambda_E^+ \equiv \lambda_E^- \mod 2.
\]
\end{corollary}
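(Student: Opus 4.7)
The corollary is essentially a one-line consequence of Proposition \ref{prop:lambda-parity}, which I would invoke twice. Applying the proposition with the choice of sign $+$ gives
\[
\lambda_E^+ \equiv \mathrm{corank}_{\Z_p}\ \Selp(E/\Q) \mod 2,
\]
and applying it with the choice of sign $-$ gives
\[
\lambda_E^- \equiv \mathrm{corank}_{\Z_p}\ \Selp(E/\Q) \mod 2.
\]
Subtracting (or simply invoking transitivity of the congruence) yields $\lambda_E^+ \equiv \lambda_E^- \mod 2$.

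There is no real obstacle here: the entire content of the corollary is already packaged inside the previous proposition, whose statement is uniform in the sign $\pm$. The only thing to verify is that the proposition was indeed proved simultaneously for both signs, which is clear from the proof (the argument used Kobayashi's control theorem, Kim's self-duality of $\mathbf{H}_{n,p}^\pm$ from Remark \ref{selfdualannihilator}, and Flach's pairing, all of which are available in both the plus and minus settings). The plan is simply to write out this two-line deduction, noting that the common intermediary is the $\Z_p$-corank of the Mordell--Weil--Selmer group at the base level $\Q$, which of course does not depend on the choice of sign (cf.\ Remark \ref{Sel0}, where $\Selp^\pm(E/\Q) = \Selp(E/\Q)$).
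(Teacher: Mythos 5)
Your proposal is correct and is exactly the paper's argument: the paper's proof of this corollary is simply ``This follows immediately from the preceding proposition,'' which unpacks to the two applications of Proposition \ref{prop:lambda-parity} (one for each sign) and transitivity, just as you wrote. Nothing further is needed.
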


\begin{proof}
This follows immediately from the preceding proposition.
\end{proof}

\begin{remark}
This corollary is not expected to hold in more general contexts, e.g. Iwasawa theory over anticyclotomic $\Zp$-extensions. See \cite[Remark 7.2]{IP} for a discussion of this.
\end{remark}

Let $r^{an}(E / \Q)$ denote the {\em analytic rank} of $E$ over $\Q$, which is the order of vanishing at $s=1$ of $L_{/ \Q}(E,s)$, its Hasse-Weil L-function over $\Q$. The following theorem is a special case of the parity conjecture for elliptic curves.

\begin{theorem}[parity conjecture]\label{parityconjecture}
Let $p \geq 3$ be an odd prime, and let $E$ be an elliptic curve defined over $\Q$ for which $p$ is supersingular. Then
\[
\mathrm{corank}_{\Z_p}\ \Selp(E / \Q) \equiv r^{an}(E / \Q) \mod 2. 
\]
\end{theorem}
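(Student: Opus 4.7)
The plan is to reduce this statement to a known case of the $p$-parity conjecture. Writing $w(E) \in \{\pm 1\}$ for the global root number appearing in the functional equation $L_{/\Q}(E,s) = w(E) L_{/\Q}(E,2-s)$, the analytic rank satisfies $r^{an}(E/\Q) \equiv (1-w(E))/2 \mod 2$. Hence it suffices to prove
\[
\mathrm{corank}_{\Z_p}\ \Selp(E/\Q) \equiv \frac{1-w(E)}{2} \mod 2,
\]
which is precisely the $p$-parity conjecture for the pair $(E,p)$.

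I would then invoke the good supersingular case of this conjecture as established in the literature. Nekov\'a\v r proved it in considerable generality using his Selmer-complex formalism together with Poitou--Tate duality, and B.~D.~Kim has given an alternative treatment in exactly our setting (good supersingular $p \geq 3$ with $a_p(E) = 0$) using the same plus/minus Selmer groups that we have been working with; either reference suffices. Once the $p$-parity conjecture is in hand, the theorem follows immediately. Combined with Proposition \ref{prop:lambda-parity}, this will yield the congruence
\[
\lambda_E^\pm \equiv r^{an}(E/\Q) \mod 2,
\]
which is precisely what we will feed into the main theorem alongside Proposition \ref{mainprop}.

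The main obstacle, were one to prove the $p$-parity conjecture from scratch rather than invoke it, is the prime-by-prime matching of local invariants: one must express $\mathrm{corank}_{\Z_p}\ \Selp(E/\Q) \mod 2$ as a product of local contributions, using a global duality argument, and then match these contributions against the local root numbers whose product defines $w(E)$. At the supersingular prime $p$ itself this matching is delicate and relies on a functional equation for the $p$-adic L-function (equivalently, for the characteristic ideal of the plus/minus Selmer group), together with a careful comparison of archimedean and $p$-adic epsilon factors. Since all of this technical input is already packaged in the cited works, in the present paper the theorem is quoted as a black box rather than reproved.
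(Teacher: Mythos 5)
Your proposal matches the paper's treatment: the theorem is quoted as a known case of the $p$-parity conjecture, with the paper citing Nekov\'a\v r \cite[Theorem A]{Nek} exactly as you suggest. Your additional remarks on the reduction via the root number and the functional equation are a harmless elaboration of the same black-box citation.
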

\begin{proof}
This is a deep result of Nekovar \cite[Theorem A]{Nek}.
\end{proof}

We are now in a position to prove our main result. 

\begin{theorem}\label{main-theorem}
Let $E_1$ and $E_2$ be two elliptic curves defined over $\Q$ which have good supersingular reduction at a prime $p \geq 3$. Suppose $E_1[p]$  and $E_2[p]$ are isomorphic as $G_\Q$-modules. Assume that $\mu_{E_1}^\pm=0$ (hence also $\mu_{E_2}^\pm=0$). Define $\Sigma_0$ as in (\ref{sigma0}), and let $S_i$ be as in Definition \ref{sidef}. Then
\[
r^{an}(E_1 / \Q) + |S_1| \equiv r^{an}(E_2 / \Q) + |S_2| \mod 2,
\]
where $|S_i|$ denotes the cardinality of $S_i$.
\end{theorem}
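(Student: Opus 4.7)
The plan is to chain together the ingredients already assembled in the paper, tracking everything modulo $2$. First, I would use the parity conjecture (Theorem \ref{parityconjecture}) to replace each analytic rank $r^{an}(E_i/\Q)$ with $\mathrm{corank}_{\Z_p}\,\Selp(E_i/\Q)$ modulo $2$, and then use Proposition \ref{prop:lambda-parity} to further replace this with $\lambda_{E_i}^\pm$ modulo $2$. After this step, the target congruence becomes
\[
\lambda_{E_1}^\pm + |S_1| \equiv \lambda_{E_2}^\pm + |S_2| \pmod{2}.
\]

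Next, I would connect the primitive invariants $\lambda_{E_i}^\pm$ to their non-primitive counterparts $\lambda_{E_i}^{\pm,\Sigma_0}$ via the short exact sequence underlying (\ref{seldiffss}): since $\Selp^{\pm,\Sigma_0}(E_i/\Q_\infty)/\Selp^\pm(E_i/\Q_\infty) \simeq (\Q_p/\Z_p)^{\delta(E_i,\Sigma_0)}$ and the $\mu$-invariants vanish, taking $\F_p$-dimensions of the $p$-torsion gives
\[
\lambda_{E_i}^{\pm,\Sigma_0} = \lambda_{E_i}^\pm + \delta(E_i,\Sigma_0).
\]
Then Proposition \ref{mainprop} (which rests on Kim's Proposition \ref{compareprim}, using the congruence $E_1[p] \simeq E_2[p]$ and $\mu^{\pm}_{E_i}=0$) yields $\lambda_{E_1}^{\pm,\Sigma_0} = \lambda_{E_2}^{\pm,\Sigma_0}$. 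Subtracting one from the other produces
\[
\lambda_{E_1}^\pm + \delta(E_1,\Sigma_0) = \lambda_{E_2}^\pm + \delta(E_2,\Sigma_0),
\]
an \emph{equality} that is in particular a congruence mod $2$.

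Finally, I would identify $\delta(E_i,\Sigma_0) \bmod 2$ with $|S_i| \bmod 2$. By definition, $\delta(E_i,\Sigma_0) = \sum_{\ell \in \Sigma_0} \delta(E_i,\ell)$, and Proposition \ref{deltacomp} says that $\delta(E_i,\ell) \equiv 1 \pmod 2$ exactly under the three conditions used to define $S_i$ in Definition \ref{sidef}. Hence $\delta(E_i,\Sigma_0) \equiv |S_i| \pmod 2$, and substituting into the previous display gives the stated congruence.

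Since all of the genuinely hard inputs (the parity conjecture via Nekov\'a\v{r}, Kim's comparison of non-primitive $\lambda$-invariants, Kobayashi's control theorem, and the Flach pairing argument encapsulated in Proposition \ref{prop:lambda-parity}) have already been proved or cited, there is no real obstacle left in the theorem itself; the remaining work is purely bookkeeping. The only point that merits a line of care in the writeup is checking that $\lambda_{E_i}^{\pm,\Sigma_0} - \lambda_{E_i}^\pm = \delta(E_i,\Sigma_0)$ genuinely holds on the nose (and not merely mod $p$), which follows because the quotient group in (\ref{seldiffss}) is divisible of corank $\delta(E_i,\Sigma_0)$ and $\mu_{E_i}^\pm=0$ forces $\Selp^\pm(E_i/\Q_\infty)$ itself to be divisible.
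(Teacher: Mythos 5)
Your proposal is correct and follows essentially the same route as the paper's own proof: Proposition \ref{mainprop} gives the equality $\lambda_{E_1}^\pm + \delta(E_1,\Sigma_0) = \lambda_{E_2}^\pm + \delta(E_2,\Sigma_0)$, the parity conjecture combined with Proposition \ref{prop:lambda-parity} converts $\lambda_{E_i}^\pm$ to $r^{an}(E_i/\Q)$ mod $2$, and Proposition \ref{deltacomp} identifies $\delta(E_i,\Sigma_0)$ with $|S_i|$ mod $2$. Your version is slightly more explicit than the paper's (which compresses the first two replacements into one sentence), but the argument is the same.
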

\begin{proof}
The hypotheses of the theorem allow us to apply Proposition \ref{mainprop}, so that
\[
\lambda_{E_1}^+ + \delta(E_1,\Sigma_0) = \lambda_{E_2}^+ + \delta(E_2,\Sigma_0).
\]
By Theorem \ref{parityconjecture}, we have $\lambda_{E_i}^+ \equiv r^{an}(E_i / \Q) \mod 2$ for $i=1,2$. By Proposition \ref{deltacomp}, $\delta(E_i,\Sigma_0) \equiv |S_i| \mod 2$, and the theorem follows.\end{proof}

\section{Examples}\label{examplesection}

In these final two sections of the paper, we refer to a computer algebra system, or a {\em CAS}, to mean either Magma \cite{magma} or Sage \cite{sage}, whichever the reader prefers. 

\subsection{$p=5$}

Let 
\[
E_1 : y^2 + xy + y = x^3 - x - 1
\]
be the curve labeled $69a$ in the Cremona database, and let
\[
E_2 : y^2 + xy + y = x^3 + 130884x - 59725523
\]
be the curve labeled $897d$. Let $p=5$; one checks that both $E_1$ and $E_2$ are supersingular at $p$. Using Sturm's bound and a CAS, it can be verified that $E_1[p] \simeq E_2[p]$. By using a CAS, or by simply referring to Pollack's data \cite{PolData}, we find that $\mu_{E_1}^\pm = \mu_{E_2}^\pm = 0$, so Theorem \ref{main-theorem} applies to these two curves.

Using a CAS, we find that
\[
r(E_1 / \Q) = 0 \ \mathrm{and} \ r(E_2 / \Q) = 1.
\]
The prime-to-$p$ conductor of $E_i[p]$ is $69$ for both $i=1,2$, hence $\Sigma_0 = \{ 13 \}$. Let $\ell = 13$. Since $\ell \equiv 3 \mod p$ and $E_2$ has (split) multiplicative reduction at $\ell$, we see that $|S_2|=0$. On the other hand, since $\ell \nmid 69$, $E_1$ has good reduction at $\ell$. One finds that $a_\ell(E_1) = -6 \equiv 4 \mod p$, and so
\[
\ell + 1  \equiv a_\ell(E_1) \mod p \quad \text{with} \  \ell \not\equiv 1 \mod p,
\]
hence $|S_1|=1$. Thus
\[
r(E_1 / \Q) + |S_1| \equiv r(E_2 / \Q) + |S_2| \mod 2.
\]

\subsection{$p=3$} 
%\subsection{Example 1}

Consider the elliptic curve
\[
\mathcal{E}_D : y^2 = x^3 - Dx
\]
where $D$ is a nonnegative integer. In \cite{RS}, Rubin and Silverberg construct an explicit family of %lliptic curves 
\[
\mathcal{E}(t) : y^2 = x^3 + D(27D^2t^4 - 18Dt^2 -1)x + 4D^2t(27D^2t^4 +1)
\]
such that $\mathcal{E}_D$ and $\mathcal{E}_t$ are congruent mod $p$ for every $t$. For the rest of the paper, let $E_1 = \mathcal{E}_1$, which Sage confirms has analytic rank $0$. One checks that $a_p(E_1)=0$, so $E_1$ is supersingular at $p$. If $3 \mid t$, then it is clear from the affine equations given above that $|\tilde{E}(\F_p)| = |\tilde{\mathcal{E}}(t)(\F_p)|$, so $a_p(\mathcal{E}_t)=0$ as well. 

%Let $E_2 = \mathcal{E}(3)$. Using a CAS and proceeding as in Section \ref{examplesection}, one finds that
%\[
%r(E_1/ \Q) = 0 \ \text{and} \ |S_1|=1,
%\]
%\[
%r(E_2/ \Q) = 1 \ \text{and} \ |S_{2}|=0,
%\]
%and these findings are consistent with Theorem \ref{main-theorem}. Examples like this may be viewed as evidence for the parity conjecture in the case $p=3$.

%\subsection{Example 2} 

Let $E_2 = \mathcal{E}(207)$. Sage hangs when asked to compute the analytic rank of $E_2 / \Q$, but it quickly returns an upper bound of $1$. 

For the pair $E_1, E_2$, we have $\Sigma_0 = \{ 37, 83, 4035637 \}$.

First let $\ell = 37 \equiv 1 \mod 3$.  Then $E_1$ has good reduction at $\ell$, and $a_\ell(E_1) = -2 \equiv 1 \mod p$. Since $\ell + 1 \not\equiv a_\ell(E_1) \mod 3$, we have $\ell \notin S_1$. On the other hand, $\ell \equiv 1 \mod 3$, but $E_2$ has nonsplit multiplicative reduction at $\ell$, so $\ell \notin S_2$. as well.

Next let $\ell = 83 \equiv 2 \mod 3$. Then $a_\ell(E_1)=0$, $E_1$ has good reduction at $\ell$, and $\ell + 1 \equiv a_\ell(E_1) \mod p$, so we have $\ell \in S_1$.  Since $E_2$ has split multiplicative reduction at $\ell$, we see $\ell \notin S_2$. 

Finally, let $\ell = 4035637 \equiv 1 \mod 3$. Again $E_1$ has good reduction, but $a_\ell(E_1) = 3598 \equiv 1 \mod 3$, so $\ell + 1 \not\equiv a_\ell(E_1) \mod p$, hence $\ell \notin S_1$. Since $E_2$ has nonsplit multiplicative reduction, we also have $\ell \notin S_2$. 

Thus

\[
r(E_1 / \Q) = 0 \ \text{and} \ |S_1|=1,
\]

\[
0 \leq r(E_2 / \Q) \leq 1 \ \text{and} \ |S_2|=0.
\]

Therefore, by Theorem \ref{main-theorem} we must have $r(E_2 / \Q) = 1$.

\vspace{5ex}

\section*{acknowledgements} It is a great pleasure to thank Keenan Kidwell for his careful reading and helpful comments on an earlier version of this paper. We are also very grateful to Antonio Lei for for suggesting many improvements over the original version of the paper.

\end{document}